\def\ssign{\textsection\nobreak\hspace{1pt plus 0.3pt}}
\let\origsection=\section 
\def\mysection{\@mystartsection{section}{1}\z@{.7\linespacing\@plus\linespacing}{.5\linespacing}{\normalfont\scshape\centering\ssign}}
\def\section{\@ifstar{\origsection*}{\mysection}}
\def\appendix{\par\c@section\z@ \c@subsection\z@
   \let\sectionname\appendixname
   \let\section=\origsection
   \def\thesection{\@Alph\c@section}} 
\def\@mystartsection#1#2#3#4#5#6{\if@noskipsec \leavevmode \fi
 \par \@tempskipa #4\relax
 \@afterindenttrue
 \ifdim \@tempskipa <\z@ \@tempskipa -\@tempskipa \@afterindentfalse\fi
 \if@nobreak \everypar{}\else
     \addpenalty\@secpenalty\addvspace\@tempskipa\fi
 \@dblarg{\@mysect{#1}{#2}{#3}{#4}{#5}{#6}}}
\def\@mysect#1#2#3#4#5#6[#7]#8{\edef\@toclevel{\ifnum#2=\@m 0\else\number#2\fi}\ifnum #2>\c@secnumdepth \let\@secnumber\@empty
  \else \@xp\let\@xp\@secnumber\csname the#1\endcsname\fi
  \@tempskipa #5\relax
  \ifnum #2>\c@secnumdepth
    \let\@svsec\@empty
  \else
    \refstepcounter{#1}\edef\@secnumpunct{\ifdim\@tempskipa>\z@ \@ifnotempty{#8}{\@nx\enspace}\else
        \@ifempty{#8}{.}{\@nx\enspace}\fi
    }\@ifempty{#8}{\ifnum #2=\tw@ \def\@secnumfont{\bfseries}\fi}{}\protected@edef\@svsec{\ifnum#2<\@m
        \@ifundefined{#1name}{}{\ignorespaces\csname #1name\endcsname\space
        }\fi
      \@seccntformat{#1}}\fi
  \ifdim \@tempskipa>\z@ \begingroup #6\relax
    \@hangfrom{\hskip #3\relax\@svsec}{\interlinepenalty\@M #8\par}\endgroup
    \ifnum#2>\@m \else \@tocwrite{#1}{#8}\fi
  \else
  \def\@svsechd{#6\hskip #3\@svsec
    \@ifnotempty{#8}{\ignorespaces#8\unskip
       \@addpunct.}\ifnum#2>\@m \else \@tocwrite{#1}{#8}\fi
  }\fi
  \global\@nobreaktrue
  \@xsect{#5}}
\definecolor{codelightgray}{gray}{0.8}
\definecolor{codeverylightgray}{gray}{0.9}
\renewcommand{\PrintDOI}[1]{\doi{#1}}
\DeclareRobustCommand{\rn}[1]{  {\fontencoding{OT2}\selectfont#1}}
\numberwithin{equation}{section}
\numberwithin{figure}{section}
\def\rmlabel{\upshape({\itshape \roman*\,})}
\def\alabel{\upshape({\itshape \alph*\,})}
\def\nlabel{\upshape({\itshape \arabic*\,})}
\let\polishlcross=\l
\def\l{\ifmmode\ell\else\polishlcross\fi}
\def\paragraph#1{	\noindent\textbf{#1.}\enspace}
\let\sm=\setminus
\def\moverlay{\mathpalette\mov@rlay}
\def\mov@rlay#1#2{\leavevmode\vtop{   \baselineskip\z@skip \lineskiplimit-\maxdimen
		\ialign{\hfil$\m@th#1##$\hfil\cr#2\crcr}}}
\newcommand{\charfusion}[3][\mathord]{
	#1{\ifx#1\mathop\vphantom{#2}\fi
		\mathpalette\mov@rlay{#2\cr#3}
	}
	\ifx#1\mathop\expandafter\displaylimits\fi}
\newcommand{\dcup}{\charfusion[\mathbin]{\cup}{\cdot}}
\DeclareFontFamily{U}  {MnSymbolC}{}
\DeclareSymbolFont{MnSyC}         {U}  {MnSymbolC}{m}{n}
\DeclareFontShape{U}{MnSymbolC}{m}{n}{
	<-6>  MnSymbolC5
	<6-7>  MnSymbolC6
	<7-8>  MnSymbolC7
	<8-9>  MnSymbolC8
	<9-10> MnSymbolC9
	<10-12> MnSymbolC10
	<12->   MnSymbolC12}{}
\DeclareMathSymbol{\powerset}{\mathord}{MnSyC}{180}
\let\epsilon=\varepsilon
\let\rho=\varrho
\let\theta=\vartheta
\def\FF{{\mathds F}}
\def\ZZ{{\mathds Z}}
\theoremstyle{plain}
\newtheorem{thm}{Theorem}[section]
\newtheorem{prop}[thm]{Proposition}
\newtheorem{clm}[thm]{Claim}
\newtheorem{fact}[thm]{Fact}
\newtheorem{cor}[thm]{Corollary}
\newtheorem{lem}[thm]{Lemma}
\newtheorem{conclusion}[thm]{Conclusion}
\theoremstyle{definition}
\newtheorem{dfn}[thm]{Definition}
\let\lra=\longrightarrow
\let\phi=\varphi
\DeclareSymbolFont{stmry}{U}{stmry}{m}{n}
\DeclareMathSymbol\arrownot\mathrel{stmry}{"58}
\DeclareMathSymbol\Arrownot\mathrel{stmry}{"59}
\def\Sym{\mathrm{Sym}}
\def\aff{\mathrm{aff}}
\let\vn=\varnothing
\begin{document}
\title[Lev's periodicity conjecture]{On Lev's periodicity 
conjecture}
\author[Christian Reiher]{Christian Reiher}
\address{Fachbereich Mathematik, Universit\"at Hamburg, Hamburg, Germany}
\email{christian.reiher@uni-hamburg.de }
\subjclass[2010]{11B13, 11B30, 11P70}
\keywords{sum-free sets, ternary vector spaces, 
Lev's periodicity conjecture}

\begin{abstract}
	We classify the sum-free subsets of $\FF_3^n$ whose density 
	exceeds~$\frac16$. This yields a resolution of Vsevolod Lev's 
	periodicity conjecture, which asserts that if a  
	sum-free subset~${A\subseteq \FF_3^n}$ is maximal with respect 
	to inclusion and aperiodic (in the sense that there 
	is no non-zero vector~$v$ satisfying $A+v=A$), 
	then $|A|\le \frac12(3^{n-1}+1)$ ---a bound known to be optimal 
	if $n\ne 2$, while for $n=2$ there are no such sets. 
\end{abstract}
	
\maketitle
	
\section{Introduction}

A subset $A$ of an abelian group $G$ is said to be {\it sum-free}
if the equation $x+y=z$ has no solutions with $x, y, z\in A$.
Thus Fermat's last theorem is equivalent to the statement that for 
every $k\ge 3$ the set of positive $k^{\mathrm{th}}$ powers is 
sum-free; this motivated Schur~\cite{Schur} to prove his 
famous theorem that the set of positive integers cannot be represented 
as a union of finitely many sum-free sets. Since the 1960's the 
study of sum-free sets has intensified due to the problems and 
results of Erd\H{o}s (see e.g.~\cite{E65}). We refer 
to~\cites{Green, EGM} for some recent milestones 
in the area and to~\cite{TV17} for a survey.

When the ambient group is a binary vector space $G=\FF_2^n$, sum-free 
sets have connections to projective geometry and coding theory. 
It is easily seen that in this case the sum-free sets of largest 
cardinality are hyperplanes not passing through the origin. Davydov 
and Tombak~\cite{DT} obtained the much deeper result that 
sum-free sets which cannot be covered by a hyperplane have size at 
most $5\cdot 2^{n-4}$. They also have a stronger theorem on 
{\it maximal sum-free sets} in~$\FF_2^n$, i.e., sum-free sets
that are maximal with respect to inclusion. Notably they showed 
that every such set $A$ possessing more than $2^{n-2}+1$ elements 
is {\it periodic}, which means that there is a non-zero vector~$v$
such that $A+v=A$.

Vsevolod Lev~\cites{VL05, VL23} initiated the study of similar 
questions in other finite vector spaces. Following the title 
of~\cite{VL05} we call vector spaces over the three-element 
field $\FF_3$ {\it ternary}. By an early observation 
of Wallis, Street, and Wallis~\cite{WSW}*{Corollary~7.11} 
sum-free subsets of~$\FF_3^n$ have at most the 
size $3^{n-1}$, the extremal examples again being  
hyperplanes not passing through the origin. The main result 
of~\cite{VL05} asserts that if a sum-free subset of $\FF_3^n$
has more than~${5\cdot 3^{n-3}}$ elements, then it can be covered 
by a hyperplane; this bound is indeed optimal whenever~${n\ge 3}$. 
Furthermore, Lev conjectured in~\cite{VL05} that the largest possible 
cardinality of an aperiodic sum-free subset of~$\FF_3^n$ 
is $\frac12(3^{n-1}+1)$ (when $n\ne 2$) and provided a construction 
exemplifying the lower bound. Despite being included as Problem~7.10
into the well-known problem collection of Croot and Lev~\cite{CL07},
the conjecture has remained open for almost two decades. 

Before we formulate our main result, which classifies all sum-free 
subsets of~$\FF_3^n$ whose cardinality is at 
least $\frac12(3^{n-1}+1)$, we would like to fix some notation 
concerning affine subspaces. Given an affine subspace $U$ of 
a vector space $V$ we write $[U]=U-U$ for the translate of $U$ 
passing through the origin, and $\dim(U)=\dim([U])$ for its dimension. 
When we have two affine subspaces $U\subseteq H\subseteq V$, then 
$H/U=H/[U]$ means the set of translates of $U$ comprising $H$ 
and $\dim(H/U)$ is shorthand for $\dim(H)-\dim(U)$, i.e., the 
dimension of the affine subspace~$H/[U]$ of the quotient 
space~$V/[U]$. As usual, $H$ is called a {\it hyperplane} 
if $\dim(V/H)=1$. The affine space generated by an arbitrary 
set $X\subseteq V$ is designated by~$\aff(X)$. 
Finally, if~$U$ is an affine subspace of~$V$, 
then the vector subspace of~$V$ generated by $U$ is called 
the {\it cone} over $U$ and denoted by~$C(U)$. For instance, 
if~$V$ is ternary and $0\not\in U$, 
then $C(U)$ is the disjoint union of $[U]$, $U$, and $-U$.      
 
\begin{dfn}\label{dfn:halves}
	Given two affine subspaces $U\subseteq H$ of a ternary vector 
	space we call a set $W\subseteq H$ an {\it $(H, U)$-half} 
	if $W+[U]=W$ and $H$ is the disjoint union of $U$, $W$, 
	and $(-U)+(-W)$.    		
\end{dfn}

Notice that the condition $W+[U]=W$ just means that $W$ is a 
union of translates of~$U$. Moreover, $H/U$ 
consists of $U$ itself and $\frac12(|H/U|-1)$ 
pairs $\{U+x, U-x\}$; the second demand on~$W$ is equivalent to~$W$
containing exactly one translate from each such pair. For this 
reason, all $(H, U)$-halves $W$ have the same 
cardinality $|W|=\frac12(|H|-|U|)$. We are now ready for the central 
definition of this article, which proceeds by recursion on the 
dimension of the ambient ternary space $V$.   

\begin{dfn}\label{dfn:prim}
	Given a finite ternary space $V$ we call a subset $A\subseteq V$ 
	{\it primitive} if 
		\begin{enumerate}[label=\alabel]
		\item\label{it:pra} either $A$ is a hyperplane not containing 
			the origin,
		\item\label{it:prb} or there are a hyperplane 
			$H\subseteq V$ not passing through the origin, 
			a proper affine subspace~${U\subsetneqq H}$, 
			an $(H, U)$-half~$W$, and a primitive set $X\subseteq C(U)$ 
			such that 
			\begin{enumerate}[label=\rmlabel]
				\item\label{it:pri} $A=W\cup X$,
				\item\label{it:prii} $X\cap [U]=\vn$,
				\item\label{it:priii} $\dim(H/U)\ge 2$ or $X\ne -U$,
				\item\label{it:priv} and $\aff(X\cap (-U))=-U$.
			\end{enumerate}
	\end{enumerate}
\end{dfn}

As it turns out, primitive sets are the same as dense maximal 
sum-free sets. Their definition might nevertheless appear complicated 
on first reading, and thus we would like to discuss a few 
low-dimensional examples before moving on. In $\FF_3$ itself the only 
primitive sets are $\{1\}$ and $\{2\}$. Assume next that there
existed a primitive subset of $\FF_3^2$ falling under 
clause~\ref{it:prb}. Without loss of generality~$H$ is the 
line $\{(1, y)\colon y\in \FF_3\}$ and $U=\{(1, 0)\}$.
Now $X$ has to be a primitive subset of $C(U)=\FF_3\times\{0\}$,
whence either $X=\{(-1, 0)\}$ or $X=\{(1, 0)\}$. But these two 
cases are ruled out by~\ref{it:priii} and~\ref{it:priv}, respectively. 
Altogether, the primitive subsets of~$\FF_3^2$ are exactly the lines 
avoiding the origin. 

Next we look at $V=\FF_3^3$ and $H=\{1\}\times \FF_3^2$, say. 
The case $\dim(U)=1$ is easily dismissed as in the previous 
paragraph, but when $U$ is a single point we can take $X=-U$
without violating~\ref{it:priii}. This yields primitive sets 
such as 
\[
	P=\{(1, 0, 1), (1, 1, -1), (1, 1, 0), (1, 1, 1), (-1, 0, 0)\}\,.
\]
As the definition of $(H, U)$-halves involves four binary choices
when $\dim(H/U)=2$, one might think at first that there also are 
some `essentially different' primitive sets of this type. 
It could be shown
by means of a simple case analysis, however, that all five-element 
primitive subsets of $\FF_3^3$ are `isomorphic' to $P$, i.e., that 
they are in the orbit of $P$ with respect to the automorphism group 
of $\FF_3^3$. 

The configuration $P$ was first found by Lev~\cite{VL05}, who
observed that for $n\ge 3$ the maximal sum-free 
set $P\times \FF_3^{n-3}$ cannot be covered by a hyperplane, 
which shows that the bound~$5\cdot 3^{n-3}$ in his aforementioned 
result is optimal. He further observed that for $n\ge 3$, $\dim(U)=0$, 
and~${X=-U}$ Definition~\ref{dfn:prim}\ref{it:prb} yields an 
aperiodic maximal sum-free subset of $\FF_3^n$ whose size
is~$\frac12(3^{n-1}+1)$.

In general, it turns out that the cardinality of a primitive set only 
depends on the dimension of its period space. Before making this 
precise we would like to recall that for a nonempty subset $A$
of an abelian group $G$ its {\it symmetry group} is defined by 
\[
	\Sym(A)=\{x\in G\colon A+x=A\}\,.
\]
If~$G$ is a ternary vector space, then $\Sym(A)$ is a linear subspace 
of~$G$, and $A$ is aperiodic if and only if $\Sym(A)=\{0\}$. As we 
shall see in the next section, a simple induction along the recursive
formation of primitive sets reveals the following.  
 
\begin{lem}\label{lem:13}
	Every primitive subset~$A$ of a finite ternary vector space~$V$ 
	is maximal sum-free and satisfies $|A|=\tfrac16(|V|+3|\Sym(A)|)$.
\end{lem}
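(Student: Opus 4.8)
The plan is to induct on $\dim V$, treating the two clauses of Definition~\ref{dfn:prim} separately; clause~\ref{it:pra} is non-recursive and will serve as the base case. If $A=\{\phi=1\}$ for a linear functional $\phi\colon V\to\FF_3$, then $A$ is sum-free since $1+1=-1\ne1$, it is maximal because adjoining a vector $v$ with $\phi(v)=0$ (resp.\ $\phi(v)=-1$) produces the solution $a+v\in A$ (resp.\ $v+v\in A$), and $\Sym(A)=\{\phi=0\}$ has index $3$, so $|A|=\tfrac13|V|=\tfrac16(|V|+3|\Sym(A)|)$.

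Under clause~\ref{it:prb} I would write $A=W\cup X$, fix $\phi$ with $H=\{\phi=1\}$, and apply the induction hypothesis to the primitive set $X$ inside the strictly smaller cone $C(U)$. Since $C(U)\cap H=U$ while $W$ misses $U$, the union $A=W\cup X$ is disjoint; by~\ref{it:prii} we have $X\subseteq U\cup(-U)$, so I split $X=X_+\sqcup X_-$ with $X_\pm=X\cap(\pm U)$, and~\ref{it:priv} makes $X_-$ nonempty. For sum-freeness, a putative relation $a_1+a_2=a_3$ has $\phi$-values $1,1,-1$ or $-1,-1,1$; if no term lies in $W$, then all three lie in $X\subseteq C(U)$ and the relation is forbidden by the induction hypothesis. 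A relation with a single $W$-term forces that term into $(-U)+(-U)=U$, contradicting $W\cap U=\varnothing$; the only remaining type, $w_1+w_2=x_-$, is ruled out in the quotient $V/[U]$, where it would read $\bar w_2=-\bar u-\bar w_1$, placing $\bar w_2$ in the reflected block $-\bar u-\bar W$ that the half condition makes disjoint from $\bar W$.

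The cardinality identity I would reduce to $\Sym(A)=\Sym(X)$, after which $|W|=\tfrac12(|H|-|U|)$ together with $|H|=\tfrac13|V|$ and $|C(U)|=3|U|$ gives the claim by direct substitution into the induction hypothesis for $|X|$. The inclusion $\Sym(A)\subseteq[U]$ is exactly where~\ref{it:priv} earns its place: a symmetry $s$ of $A$ must preserve the two $\phi$-levels, hence fixes $X_-=A\cap(-U)$ setwise, hence preserves $\aff(X_-)=-U$ and so lies in $[U]$. Conversely any $s\in\Sym(X)$ lies in $[U]$ (it cannot mix the $\phi$-levels of $X$), and then $W+[U]=W$ gives $W+s=W$ while $s$ separately fixes $X_+$ and $X_-$, so that $A+s=A$.

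The crux is maximality, i.e.\ $V=A\cup(A+A)\cup(A-A)\cup(-A)$. For $v\in C(U)$ this is immediate from the induction hypothesis, since $A\cap C(U)=X$ and $X$ is maximal sum-free in $C(U)$. The work is for $v\notin C(U)$, sorted by $\phi(v)$: the reflected blocks at levels $\pm1$ are caught by $X_--W$ and $W-X_-$ (here $X_-\ne\varnothing$ and $W+[U]=W$ allow me to solve for the right $[U]$-coset representative), while the block $-W$ at level $-1$ lies in $-A$. The delicate layer is $\phi(v)=0$ with $v\notin[U]$, where I pass to $V/[U]$ with direction space $L=[H]/[U]$. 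Once $\dim(H/U)\ge2$, the half $S\subseteq L$ corresponding to $W$ satisfies $S-S=L$ (a short counting argument), so $W-W$ already covers all of $[H]$. When $\dim(H/U)=1$, however, $W-W$ reaches only $[U]$, the two remaining cosets being covered by $W+X_-$ and by $X_+-W$; the latter needs $X_+\ne\varnothing$, and this is exactly what~\ref{it:priii} secures --- were $X_+$ empty, $X=X_-$ would be sum-free inside the strictly larger sum-free set $-U$, forcing $X=-U$, which~\ref{it:priii} forbids in dimension one. Pinning down this final coset, and recognising that~\ref{it:priii} is precisely its remedy, is where I expect the real difficulty to lie.
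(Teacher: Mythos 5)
Your proof is correct and follows essentially the same route as the paper: induction on $\dim V$ with the hyperplane case as base, the identity $\Sym(A)=\Sym(X)\subseteq[U]$ feeding the cardinality count, and conditions~\ref{it:priii} and~\ref{it:priv} invoked at exactly the points where the paper needs them. The only presentational difference is that you establish maximality by directly covering $V$ with $A\cup(A+A)\cup(A-A)\cup(-A)$, splitting the layer $[H]$ according to whether $\dim(H/U)\ge 2$, whereas the paper argues by contradiction and disposes of that layer in one stroke by applying Lemma~\ref{lem:22} to $A\cap H=W\cup(X\cap U)$; the underlying content is the same.
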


This provides enough context for the main result of this article, 
which we would like to state next. 
 
\begin{thm}\label{thm:main}
	A subset $A$ of a finite ternary vector space $V$ 
	with $|A|>\frac16|V|$ is maximal sum-free if and only if 
	it is primitive. 
\end{thm}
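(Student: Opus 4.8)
The backward direction is essentially contained in Lemma~\ref{lem:13}: if $A$ is primitive then it is maximal sum-free, and the cardinality formula gives $|A|=\tfrac16(|V|+3|\Sym(A)|)\ge\tfrac16(|V|+3)>\tfrac16|V|$, so the density hypothesis holds automatically. The whole difficulty therefore lies in the forward direction, which I would prove by induction on $\dim V$. The cases $\dim V\le 2$ serve as the base and are settled by the explicit low-dimensional analysis already carried out: a maximal sum-free set of density exceeding~$\tfrac16$ in $\FF_3$ or $\FF_3^2$ is forced to be a point $\{1\}$ or $\{2\}$, respectively a line avoiding the origin, and these are exactly the primitive sets there. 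So assume the assertion in all ternary spaces of dimension below $\dim V=n\ge 3$, and let $A\subseteq V$ be maximal sum-free with $|A|>\tfrac16|V|$.

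The first move is to reduce to the aperiodic case. If $K:=\Sym(A)\ne\{0\}$, then $A$ is a union of $K$-cosets and descends to a set $\bar A\subseteq V/K$ which is again maximal sum-free, is aperiodic because $K$ is the full symmetry group, and has the same density $>\tfrac16$. As $\dim(V/K)<n$, induction makes $\bar A$ primitive, and a routine lemma---proved by following the recursion in Definition~\ref{dfn:prim} one layer at a time---shows that the preimage of a primitive set under the quotient map is again primitive, so $A$ is primitive. Hence I may assume $\Sym(A)=\{0\}$. Since for $n\ge 2$ a hyperplane off the origin has nonzero symmetry group~$[H]$, an aperiodic $A$ cannot be of the shape~\ref{it:pra}, and the task becomes to manufacture the data $U\subsetneqq H$, $W$, $X$ of clause~\ref{it:prb}.

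Everything now hinges on one geometric statement: there is a linear functional $\phi\colon V\to\FF_3$ with $0\notin\phi(A)$, i.e.\ a linear hyperplane $[H]=\ker\phi$ disjoint from $A$. Granting this, I write $H$ for the coset of $[H]$ meeting $A$ in the larger part, so that $A\subseteq H\dcup(-H)$; aperiodicity forces $A\cap(-H)\ne\vn$ (otherwise $A=H$ is periodic), and I set
\[
	U:=\aff\bigl(-(A\cap(-H))\bigr)\subseteq H,\qquad
	X:=A\cap C(U),\qquad
	W:=(A\cap H)\sm U .
\]
Then $A=W\dcup X$ because $A$ avoids $[H]\supseteq[U]$, and conditions~\ref{it:prii} and~\ref{it:priv} hold by the very definition of $U$ and $X$. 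The substance is to prove that $W$ is an $(H,U)$-half in the sense of Definition~\ref{dfn:halves}: the invariance $W+[U]=W$ and the partition of $H$ into $U$, $W$ and its reflection $(-U)+(-W)$ must be extracted from the sum-free constraints $\bigl((A\cap H)+(A\cap H)\bigr)\cap(A\cap(-H))=\vn$ and $\bigl((A\cap(-H))+(A\cap(-H))\bigr)\cap(A\cap H)=\vn$ together with the maximality of $A$; this is precisely where the half-versus-reflection dichotomy is born, and it also forces $U\subsetneqq H$, hence $\dim C(U)\le n-1$. Obtaining the functional $\phi$ and then pinning down the half structure is the step I expect to be the crux: it is where sum-freeness, maximality and the exact threshold~$\tfrac16$ must be used simultaneously---via the principle that a dense sum-free set carries a large nontrivial Fourier coefficient, sharpened by maximality into the exact conclusion that $A$ vacates a whole linear hyperplane. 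The coarser constant $\tfrac{5}{27}$ of Lev would only place $A$ inside a single coset, so the finer analysis genuinely belongs to the window $\tfrac16<|A|/|V|\le\tfrac{5}{27}$.

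It remains to feed $X$ to the induction. Once $W$ is known to be a half we have $|W|=\tfrac12(|H|-|U|)$, whence, using $|V|=3|H|$,
\[
	|X|=|A|-|W|>\tfrac16|V|-\tfrac12(|H|-|U|)=\tfrac12|U|=\tfrac16|C(U)|,
\]
so $X$ has density exceeding $\tfrac16$ in the smaller space $C(U)$. After verifying that $X=A\cap C(U)$ is maximal sum-free inside $C(U)$---which must be deduced from the maximality of $A$ in $V$ using that $W$ lies in $H\sm U$ and creates no new relations within $C(U)$---the inductive hypothesis renders $X$ primitive. Finally $\dim C(U)<\dim V$ and $X\cap[U]=\vn$ are in place, and condition~\ref{it:priii} is checked by ruling out the degenerate configuration $X=-U$ with $\dim(H/U)=1$ for a maximal aperiodic set. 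This completes the verification of all of Definition~\ref{dfn:prim}\ref{it:prb} and exhibits $A$ as primitive.
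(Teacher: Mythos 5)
Your backward direction and the final bookkeeping (the density of $X$ in $C(U)$, conditions \ref{it:prii} and \ref{it:priv}) match the paper, but the argument has a genuine gap exactly where you flag ``the crux''. The statement that a maximal sum-free $A$ with $|A|>\frac16|V|$ vacates a whole linear hyperplane $[H]$, and that $A\cap(H\sm U)$ is then an $(H,U)$-half, \emph{is} the theorem in all but name: every primitive set has these properties, and no argument at density $\frac16$ produces them in one spectral step. The paper uses no Fourier analysis at all. Instead it reaches this structure through a long chain: Kneser's theorem packaged as Lemma~\ref{lem:Kn}; the quadruple-sum property $0\notin 4A$ for dense sum-free sets (Proposition~\ref{prop:42}), whose proof requires a separate, hands-on treatment of $\FF_3^4$ (Lemmas~\ref{lem:41} and~\ref{lem:42}); an averaging argument producing a codimension-two affine subspace on which $A$ has density above $\frac12$ (Proposition~\ref{prop:51}, resting on Corollary~\ref{cor:k1} and Lemma~\ref{lem:kaff}); and finally Conclusion~\ref{conc:310}, which splits into the two hyperplane configurations of Propositions~\ref{prop:33} and~\ref{prop:37}. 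The ``half-versus-reflection dichotomy'' you defer is precisely the Claim inside Proposition~\ref{prop:33}, and it needs Kneser's theorem together with $\aff(A\cap(-U))=-U$ and a cardinality input (Lemma~\ref{lem:24}) about the already-identified primitive set $X$ --- none of which is supplied by the sum-free constraints alone.

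There is a second structural problem: your induction hypothesis is too weak to close. You induct on ``maximal sum-free of density $>\frac16$ implies primitive'' and then need $X=A\cap C(U)$ to be maximal sum-free \emph{in $C(U)$}. That does not follow from maximality of $A$: for $v\in[U]$ the set $X\cup\{v\}$ may well be sum-free in $C(U)$ while $A\cup\{v\}$ is not, because $v+W=W$ creates sums involving $W$ only; so maximality of $A$ gives no contradiction and $X$ can a priori fail to be maximal in $C(U)$. The paper warns about exactly this in Section~\ref{sec:pp} and therefore strengthens the induction hypothesis to $\Phi(V)$: every (not necessarily maximal) sum-free set of density $>\frac16$ in a lower-dimensional space is \emph{subprimitive}, i.e.\ contained in a primitive set. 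One then shows $A\subseteq W\dcup X$ for a primitive $X$ and recovers equality from Lemma~\ref{lem:13}. Your aperiodicity reduction via the quotient $V/\Sym(A)$ is legitimate (and not used in the paper), but it does not touch either of these difficulties.
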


\begin{cor}[Lev's periodicity conjecture]\label{cor:seva}
	Let $A\subseteq \FF_3^n$ be a maximal sum-free set. 
	If $A$ is aperiodic, then $n\ne 2$ and $|A|\le\frac12(3^{n-1}+1)$.
	\qed
\end{cor}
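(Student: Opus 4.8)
The plan is to derive Corollary~\ref{cor:seva} directly from Theorem~\ref{thm:main} and Lemma~\ref{lem:13}, so the proof is essentially a matter of bookkeeping once those two results are in hand. Let me sketch how I would proceed.

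First I would reduce to the primitive case. Suppose $A\subseteq\FF_3^n$ is maximal sum-free and aperiodic. The claimed bound $|A|\le\frac12(3^{n-1}+1)=\frac16(3^n+3)$ is only interesting when $A$ is large; more precisely, if $|A|\le\frac16|V|=\frac16\cdot 3^n$, then $|A|<\frac12(3^{n-1}+1)$ holds trivially and there is nothing to prove. So I may assume $|A|>\frac16|V|$, and then Theorem~\ref{thm:main} tells me that $A$ is primitive. At this point Lemma~\ref{lem:13} applies and gives the exact formula $|A|=\frac16(|V|+3|\Sym(A)|)$. Since $A$ is aperiodic we have $\Sym(A)=\{0\}$, hence $|\Sym(A)|=1$ and $|A|=\frac16(3^n+3)=\frac12(3^{n-1}+1)$. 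Thus the cardinality bound is in fact attained with equality, which is stronger than the stated inequality.

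It remains to establish the assertion $n\ne 2$. Here I would argue by contradiction: if $n=2$, then $|V|=9$ and the above computation forces $|A|=\frac12(3+1)=2$. On the other hand, the discussion following Definition~\ref{dfn:prim} shows that the only primitive subsets of $\FF_3^2$ are the lines avoiding the origin, each of which has $3$ elements and is in fact periodic under translation along its own direction. Hence there is no aperiodic primitive subset of $\FF_3^2$ at all, so the hypothesis that $A$ is aperiodic, maximal sum-free, and large cannot be met when $n=2$; this contradiction yields $n\ne 2$. Equivalently, one may observe that for $n=2$ the required size $2$ is incompatible with $A$ being maximal sum-free, since any two-element subset of a line avoiding the origin can be extended.

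The main obstacle, if there is one, is purely organizational rather than mathematical: I must be careful to separate the small-cardinality regime (where the bound is automatic and Theorem~\ref{thm:main} does not apply) from the dense regime (where primitivity kicks in), and to verify that the boundary case $n=2$ is genuinely excluded rather than merely yielding a weaker bound. Both points are handled by the explicit low-dimensional analysis already recorded after Definition~\ref{dfn:prim}, so no new ideas are needed. Since all the substantive work is carried by the main theorem and the cardinality lemma, the corollary follows immediately, which is presumably why the authors marked it with~\qed.
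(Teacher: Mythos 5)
Your argument is correct and is precisely the deduction the paper intends by leaving the corollary without a separate proof: Theorem~\ref{thm:main} reduces the dense case to primitive sets, Lemma~\ref{lem:13} together with $\Sym(A)=\{0\}$ gives $|A|=\frac12(3^{n-1}+1)$ exactly, and the classification of primitive subsets of $\FF_3^2$ as origin-avoiding lines (all of which are periodic) excludes $n=2$. The one point worth making explicit is that for $n=2$ every maximal sum-free set has at least two elements (neither $\vn$ nor a singleton $\{v\}$ is maximal, since any $w\notin\{0,v,-v\}$ can be added), so such a set automatically satisfies $|A|>\frac16|V|$ and Theorem~\ref{thm:main} really does apply there; your ``equivalently'' remark gestures at this but does not quite pin it down.
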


Let us finally point out that such problems can also be studied 
in $\FF_p^n$ for $p>3$ (see, 
e.g.,~\cites{VL23, LV23, RZ24a, RZ24b, RS71}
and the concluding remarks).

\section{Properties of primitive sets}\label{sec:pp}

In this section we collect some simple properties of primitive 
sets $A$, many of which are shown by unravelling their recursive
definition. In such situations it will be convenient to say that~$A$
is {\it derived} from $(H, U, W, X)$ if $H$, $U$, $W$, $X$ are as 
described in Definition~\ref{dfn:prim}\ref{it:prb}. Working towards 
Lemma~\ref{lem:13} we first look at symmetry sets.   

\begin{lem}\label{lem:21} 
	If a primitive set $A$ is derived from $(H, U, W, X)$, 
	then 
		\[	
		\Sym(A)=\Sym(X)\subseteq [U]\,.
	\]
	\end{lem}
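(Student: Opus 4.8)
The plan is to work entirely inside the two ``trichotomies'' that the ambient geometry provides. Write $H=\lambda^{-1}(1)$ for a linear functional $\lambda$ on $V$, so that $V=[H]\dcup H\dcup(-H)$ with $[H]=\lambda^{-1}(0)$ and $-H=\lambda^{-1}(-1)$; likewise realise $U$ as $\mu^{-1}(1)$ for a linear functional $\mu$ on the cone $C(U)$, so that $C(U)=[U]\dcup U\dcup(-U)$. First I would locate the three pieces of $A=W\cup X$. Since $X\subseteq C(U)$ and $X\cap[U]=\vn$ by~\ref{it:prii}, we have $X\subseteq U\dcup(-U)$, while $W\subseteq H$ is disjoint from both $U$ and $-U$ (the former by the half property, the latter because $-U\subseteq-H$). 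Intersecting with the trichotomy of $V$ this yields the clean decomposition
\[
	A\cap H=W\cup(X\cap U),\qquad A\cap(-H)=X\cap(-U),\qquad A\cap[H]=\vn\,.
\]

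Next I would pin down $\Sym(A)$. Because $\Sym(A)$ is a subgroup, $\lambda(\Sym(A))$ is a subgroup of $\FF_3$, hence either $\{0\}$ or all of $\FF_3$. If it contained $1$, a witnessing $s$ would translate the \emph{nonempty} set $A\cap(-H)=X\cap(-U)$ into $\lambda^{-1}(0)=[H]$, where $A$ has no points; this is impossible, so $\Sym(A)\subseteq[H]$. The nonemptiness used here is exactly what~\ref{it:priv} guarantees, since $\aff(X\cap(-U))=-U$ forces $X\cap(-U)\ne\vn$. To sharpen $[H]$ to $[U]$, observe that every $s\in[H]$ fixes $-H$ setwise, so an $s\in\Sym(A)$ stabilises $A\cap(-H)=X\cap(-U)$, and therefore its affine hull; by~\ref{it:priv} that hull is $-U$, whence $(-U)+s=-U$ and $s\in[U]$. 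Thus $\Sym(A)\subseteq[U]$.

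Finally I would identify $\Sym(A)$ with $\Sym(X)$. For $s\in[U]$ the translate $W+s$ equals $W$ (as $W$ is a union of translates of $U$), and $X\cup(X+s)\subseteq U\dcup(-U)$ stays disjoint from $W$; comparing $A+s=W\cup(X+s)$ with $A=W\cup X$ both inside and outside $U\dcup(-U)$ shows $A+s=A\iff X+s=X$. Hence $\Sym(A)=[U]\cap\Sym(X)$. It remains to see $\Sym(X)\subseteq[U]$, which follows by repeating the subgroup/functional argument of the previous paragraph with $\mu$ in place of $\lambda$ (noting $\Sym(X)\subseteq C(U)$, so that $\mu$ applies): if some $s\in\Sym(X)$ had $\mu(s)=1$, it would push the nonempty $X\cap(-U)$ into $[U]$, contradicting~\ref{it:prii}. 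Combining the two relations yields $\Sym(A)=\Sym(X)\subseteq[U]$.

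The bookkeeping is elementary; the one place that genuinely requires care, and the main thing to get right, is the interplay of the nonemptiness inputs with the functional arguments. Condition~\ref{it:priv} does double duty: it supplies $X\cap(-U)\ne\vn$ (so that the ``pushing into the central hyperplane'' contradictions actually bite) and it provides the affine hull that upgrades the conclusion from $[H]$ down to $[U]$. I would therefore be careful to invoke~\ref{it:priv} at both moments, and to check the disjointness $W\cap(U\cup(-U))=\vn$ that legitimises cancelling $W$ in the last step.
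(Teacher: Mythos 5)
Your proof is correct and follows essentially the same route as the paper's: both use $A\cap[H]=\vn$ together with the nonemptiness of $X\cap(-U)$ (supplied by~\ref{it:priv}) to force $\Sym(A)\subseteq[U]$, and then the identity $W+[U]=W$ to transfer symmetries between $A$ and $X$. The only cosmetic difference is that you descend from $[H]$ to $[U]$ by stabilising the affine hull $\aff(X\cap(-U))=-U$, whereas the paper simply writes the symmetry vector as a difference of two elements of $X\cap(-U)\subseteq -U$.
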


\begin{proof}
	Suppose first that $v\in \Sym(A)$. Since $A\cap [H]=\vn$,
	we have $v\in [H]$. For this reason, $v$ can be expressed 
	as a difference of two vectors from $A\cap (-H)=X\cap (-U)$
	and, in particular, we have $v\in [U]$. This entails $W+v=W$
	and $v\in \Sym(X)$ follows. 
	
	Now suppose conversely that some vector $v\in \Sym(X)$ is given. 
	Owing to $X\subseteq U\cup (-U)$ we have $v\in [U]$, whence 
	$W=W+v$ and $v\in \Sym(A)$. 
\end{proof}

We proceed with an observation of Lev (cf.\ \cite{VL05}*{Lemma~5}).

\begin{lem}[Lev]\label{lem:22}
	Every subset $A$ of a ternary vector space $V$ 
	with $|A|>\frac13|V|$ satisfies $A-A=V$. 
\end{lem}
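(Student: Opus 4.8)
The plan is to prove the contrapositive by a counting argument over the lines of $V$ in a fixed direction. Fix a vector $v\in V$ and aim to show $v\in A-A$. The case $v=0$ is immediate, since $A$ is nonempty (as $|A|>\frac13|V|\ge 1$) and $0=a-a$ for any $a\in A$, so I may assume $v\ne 0$ and argue by contradiction, supposing $v\notin A-A$.

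The ternary feature I would exploit is that the one-dimensional subspace $\langle v\rangle=\{0, v, -v\}$ has order three. Thus $V$ splits into exactly $\frac13|V|$ cosets of $\langle v\rangle$, each a line of the form $\ell=\{x, x+v, x-v\}$. Within such a line the three pairwise differences are $v$, $v$, and $2v=-v$; so any two distinct points of $\ell$ differ by $\pm v$.

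Next I would note that $A-A$ is symmetric, so $v\notin A-A$ forces $-v\notin A-A$ as well. Consequently no line $\ell$ can contain two distinct points of $A$: otherwise those two points would differ by $v$ or $-v$, placing one of these in $A-A$. Hence $A$ meets each of the $\frac13|V|$ lines in at most one point, which yields $|A|\le\frac13|V|$, contrary to hypothesis. This contradiction shows $v\in A-A$, and since $v$ was arbitrary I conclude $A-A=V$.

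The argument is short, and I do not expect a genuine obstacle; the only point to get right is the arithmetic in $\FF_3$ guaranteeing that all intra-line differences equal $\pm v$, so that forbidding the single value $v$ already rules out any repetition within a line. This is exactly the step that breaks down for larger primes and explains why the conclusion is special to the ternary setting.
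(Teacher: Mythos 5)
Your proof is correct and essentially identical to the paper's: the paper notes that $A$, $A+v$, and $A-v$ cannot be pairwise disjoint since $3|A|>|V|$, which is exactly your observation that $A$ must meet some coset of $\{0,v,-v\}$ twice, and both arguments hinge on the same ternary identity $-v=2v$.
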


\begin{proof} 
	Let $v\in V$ be arbitrary. For cardinality reasons, the sets 
	$A$, $A+v$, and $A-v$ cannot be disjoint. Taking into account
	that $-v=2v$ this yields $v\in A-A$. 
\end{proof}

\begin{proof}[Proof of Lemma~\ref{lem:13}]
	When $A$ is a hyperplane we have $|V|=3|A|$ as well 
	as $\Sym(A)=[A]$, and everything is clear.
	We shall now establish the general case by induction on $n=\dim(V)$.
	In the induction step we may assume that $A$ is derived 
	from $(H, U, W, X)$. The induction hypothesis and Lemma~\ref{lem:21}
	yield 
		\[
		|A|
		=
		|W|+|X|
		=
		\tfrac12(|H|-|U|)+\tfrac16(|C(U)|+3|\Sym(A)|)
		=
		\tfrac16(|V|+3|\Sym(A)|)\,.
	\]
		
	Furthermore, $A$ is easily seen to be sum-free and it remains 
	to show that there is no vector $v\in V\sm A$ such that 
	$A\dcup\{v\}$ is still sum-free. Assume for the sake of 
	contradiction that some such vector $v$ exists. 
	The fact that $X$ is, by induction, maximal sum-free in $C(U)$ 
	implies $v\not\in C(U)$.
	
	So $v$ belongs to one of the five sets $W$, $-W$, $(-W)+(-U)$, 
	$W+U$, or $[H]$. The first two cases are impossible due to 
	$W\subseteq A$ and $-W\subseteq W+W\subseteq A+A$. 
	Next we observe that 
	Definition~\ref{dfn:prim}\ref{it:prb}\ref{it:priv} 
	yields some 
	vector $y\in (-U)\cap X$. Now $(-W)+(-U)\subseteq y-A\subseteq A-A$
	and $W+U\subseteq A-y\subseteq A-A$ rule out the third and fourth 
	possibility. Altogether this proves~${v\in [H]}$, which in view of
	Lemma~\ref{lem:22} and $v\not\in A-A$ yields 
	$|A\cap H|\le \frac13|H|$.
	
	But $|A\cap H|=|W|+|U\cap X|$ and unless $\dim(H/U)=1$ we already 
	have 
		\[
		|W|=\tfrac12(|H|-|U|)>\tfrac13|H|\,.
	\]
		This tells us $\dim(H/U)=1$ 
	and $U\cap X=\vn$. Now the second and third clause of 
	Definition~\ref{dfn:prim}\ref{it:prb} lead 
	to $X\subsetneqq (-U)$, which 
	contradicts the fact that $X$ is a maximal sum-free subset 
	of~$C(U)$.	
\end{proof}

Here is a rather surprising property of primitive sets, 
foreshadowed by~\cite{VL05}*{Lemma~4}, which will later 
be transferred to dense sum-free sets 
(see Proposition~\ref{prop:42}). 
 
\begin{lem}\label{lem:4A} 
	If $A$ is a primitive subset of a ternary vector space $V$, 
	then $4A=A+A+A+A$ does not contain the origin. 
\end{lem}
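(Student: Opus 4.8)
The plan is to prove $0 \notin 4A$ by induction on $\dim(V)$, following the recursive structure of primitive sets. The base case is when $A$ is a hyperplane $H$ not containing the origin; here $4A \subseteq 4H$, and since $H$ lives in the affine hyperplane at ``level $1$'' of the linear functional cutting out $[H]$, every element of $4A$ sits at level $4 \equiv 1 \pmod 3$, which is nonzero. So $0 \notin 4A$ in this case.

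For the induction step, assume $A$ is derived from $(H, U, W, X)$, so that $A = W \cup X$ with $W \subseteq H$ an $(H,U)$-half and $X \subseteq C(U)$ primitive of smaller ambient dimension (note $\dim(C(U)) = \dim(U)+1 \le \dim(H)+1 = \dim(V)$, but the recursion genuinely decreases because $X$ lives in the smaller space $C(U)$). Suppose toward a contradiction that $0 = a_1 + a_2 + a_3 + a_4$ with each $a_i \in A = W \cup X$. The key organizing idea is to track the coset of $[H]$ to which each summand belongs: write $\lambda$ for the linear functional with $H = \{\lambda = 1\}$, so that $W \subseteq \{\lambda = 1\}$, while $X \subseteq C(U) \subseteq C(H) = [H] \cup H \cup (-H)$ splits into parts with $\lambda \in \{0, 1, -1\}$. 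The constraint $\sum \lambda(a_i) \equiv 0 \pmod 3$ restricts which combinations of $W$-summands and $X$-summands can occur, and I would split into cases according to how many of the four summands lie in $W$ versus in $X$.

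The cases with all four summands in $X$ reduce directly to the induction hypothesis applied to the primitive set $X$ (since $0 \notin 4X$). The remaining cases each involve at least one $W$-summand, and here I would exploit the sum-free and half structure: because $W + [U] = W$ and $W$ is a genuine ``half'' of $H/U$, any relation mixing $W$-elements with $X$-elements can be pushed, modulo $[U]$, into a relation in the quotient space or reduced to a forbidden configuration. Concretely, summing the $\lambda$-values forces the number of $W$-summands to be $\equiv 0 \pmod 3$ among those contributing $\lambda = 1$ minus those contributing $\lambda = -1$, so the viable cases are tightly constrained; I would use Lemma~\ref{lem:21} (giving $\Sym(A) = \Sym(X) \subseteq [U]$) together with property~\ref{it:priv} (that $\aff(X \cap (-U)) = -U$) to supply the needed $-U$-elements and translate relations back into $C(U)$, contradicting $0 \notin 4X$ or the disjointness in the definition of an $(H,U)$-half.

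The main obstacle I expect is the bookkeeping in the genuinely mixed cases, particularly the case of exactly two or three summands from $W$: there the half-condition that $H$ is the disjoint union of $U$, $W$, and $(-U) + (-W)$ has to be leveraged carefully, since a sum of two $W$-elements can land in any coset $U + x$ and one must argue it cannot be cancelled back to the origin by the two remaining summands without forcing one of them into $-W$ or producing an element already excluded. Getting the quotient-space reduction to interact correctly with the recursion on $X \subseteq C(U)$—especially keeping the level function $\lambda$ consistent between $V$ and $C(U)/[U]$—will require the most care.
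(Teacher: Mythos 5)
Your setup is the same as the paper's: induct along the recursive structure of Definition~\ref{dfn:prim} and track the image of each summand under the projection $V\lra V/[H]\cong\FF_3$. But the write-up stops exactly where the work begins: the mixed cases are announced as bookkeeping to be carried out, with a plan (``translate relations back into $C(U)$ using Lemma~\ref{lem:21} and condition~\ref{it:priv}'') that does not describe an actual argument---and in fact neither Lemma~\ref{lem:21} nor~\ref{it:priv} is needed for this lemma. Two observations that are missing from your proposal collapse the case analysis to a single case. First, Definition~\ref{dfn:prim}\ref{it:prb}\ref{it:prii} gives $X\cap[U]=\emptyset$, so $X\subseteq U\cup(-U)$ and hence $A\cap[H]=\emptyset$: no summand can sit at level $0$, contrary to your assertion that $X$ splits into parts with $\lambda\in\{0,1,-1\}$. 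Four values in $\{1,-1\}$ sum to $0$ in $\FF_3$ only as $1+1-1-1$, so exactly two summands, say $x_1,x_2$, lie in $A\cap H=W\cup(X\cap U)$ and two, say $x_3,x_4$, lie in $A\cap(-H)=X\cap(-U)$; there is no ``three or four $W$-summands'' case, and your parity statement about the number of $W$-summands modulo $3$ is not the relevant constraint.

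Second, the decisive step is carried out in $V/[U]$, not by supplying extra elements of $-U$: since $x_3,x_4\in -U$, the coset of $x_3+x_4$ is $(-U)+(-U)=U$, so $x_1+x_2$ must lie in the coset $-U$. If $x_1,x_2\in W$, then $x_1\in(-U)+(-W)$, contradicting the disjointness of $W$ and $(-U)+(-W)$ in Definition~\ref{dfn:halves}; if, say, $x_2\in X\cap U$ and $x_1\in W$, then $x_1$ lies in the coset $-U-U=U$, contradicting $W\cap U=\emptyset$. Hence $x_1,x_2\in X\cap U$, all four summands lie in $X$, and $0\in 4X$ contradicts the induction hypothesis applied to the primitive set $X\subseteq C(U)$. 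This one paragraph is the entire content of the induction step; as it stands, your proposal identifies the correct frame but leaves that content as an acknowledged ``main obstacle,'' so it is a plan rather than a proof.
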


\begin{proof}
	If $A$ is a hyperplane with $0\not\in A$, 
	then $4A=A$ and the result is 
	clear. Arguing again by induction on $\dim(V)$ we suppose 
	that $A$ is derived from $(H, U, W, X)$ and consider the 
	canonical projection $\pi\colon V\lra V/[H]\cong \FF_3$ sending~$H$ 
	to~$1$. Assume for the sake of contradiction 
	that there are four vectors $x_1, x_2, x_3, x_4\in A$ such 
	that $x_1+x_2+x_3+x_4=0$. 
	
	The numbers $\pi(x_i)$ add up to zero as well, but due 
	to $A\cap [H]=\vn$ none of them is itself zero. 
	So without loss of generality we can 
	suppose $x_1, x_2\in A\cap H$ 
	and $x_3, x_4\in A\cap (-H)=X\cap (-U)$. In the quotient space 
	$V/U$ we thus have $x_1+x_2+U=U$ and, as $W$ is a $(H, U)$-half, 
	this is only possible if $x_1, x_2\in X\cap U$. Altogether
	this shows $0\in 4X$, which contradicts the induction hypothesis.
\end{proof}

\begin{lem}\label{lem:24}
	If $A$ denotes a primitive subset of a ternary vector space $V$
	which is not a hyperplane, then every 
	hyperplane $J\subseteq V$ satisfies  
	$|A|+|A\cap J|\le |J|$.
\end{lem}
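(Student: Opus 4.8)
The plan is to induct on $n=\dim V$, assuming $A$ is derived from $(H,U,W,X)$ and that the lemma holds for primitive non-hyperplane sets in every space of smaller dimension. Writing $h=|H|=|J|$, $u=|U|$ and $s=|\Sym(A)|$, Lemma~\ref{lem:13} turns the goal $|A|+|A\cap J|\le|J|$ into the equivalent bound $|A\cap J|\le\tfrac12(h-s)$; by Lemma~\ref{lem:21} we have $\Sym(A)=\Sym(X)\subseteq[U]$, so $s\le u$, with $s=u$ precisely when $X=-U$. Since $A$ is the disjoint union of $W$, $X\cap U$ and $X\cap(-U)$, and $X\subseteq U\cup(-U)$, I may write $|A\cap J|=|W\cap J|+|X\cap J|$, and it is this sum I must control.

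The first ingredient is two estimates for $W\cap J$. As $W$ is a union of translates of $[U]$, one always has $|W\cap J|\le|W|=\tfrac12(h-u)$; and if $[U]\not\subseteq[J]$, then the functional cutting out $J$ is nonzero on $[U]$, so each translate of $[U]$ meets $J$ in exactly a third of its points and $|W\cap J|=\tfrac13|W|$. The second ingredient bounds $|X\cap J|=|X\cap(J\cap C(U))|$ according to how $J$ meets the cone, noting that $J\cap C(U)$ is either empty, a hyperplane of $C(U)$, or all of $C(U)$. When $X$ is not a hyperplane of $C(U)$ --- which, since every hyperplane of $C(U)$ disjoint from $[U]$ is $U$ or $-U$ and $X\cap[U]=\vn$ by Definition~\ref{dfn:prim}\ref{it:prb}\ref{it:prii} while $X=U$ is excluded by clause~\ref{it:priv}, happens exactly when $X\ne-U$ --- the induction hypothesis applied to $X$ inside $C(U)$ (a space of dimension $\dim U+1<n$), together with Lemma~\ref{lem:13}, yields $|X\cap J'|\le|J'|-|X|=\tfrac12(u-s)$ for every hyperplane $J'$ of $C(U)$. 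Consequently, in the empty and hyperplane cases the crude estimate $|W\cap J|\le|W|$ already gives $|W\cap J|+|X\cap J|\le\tfrac12(h-u)+\tfrac12(u-s)=\tfrac12(h-s)$.

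The delicate situation is when $J$ swallows the cone direction, forcing $X\subseteq J$. If $C(U)\subseteq J$ then $0\in J$, so $W\subseteq J$ would give $H=U\dcup W\dcup((-U)+(-W))\subseteq J$ and hence $H=J$ by dimension, which is impossible as $0\notin H$; thus $W\not\subseteq J$, and since $W$ and $J$ are both unions of $[U]$-translates this costs $W$ a full translate, so $|W\cap J|\le|W|-u\le|W|-s$, and adding $|X\cap J|=|X|=\tfrac12(u+s)$ again lands on $\tfrac12(h-s)$. In the remaining case $X=-U$ (so $s=u$ and, by Definition~\ref{dfn:prim}\ref{it:prb}\ref{it:priii}, $\dim(H/U)\ge2$, i.e.\ $h\ge9u$) I analyse $(-U)\cap J$: if $-U$ misses $J$ the first $W$-estimate suffices, and if $-U$ meets $J$ in a proper hyperplane then $[U]\not\subseteq[J]$ and the sharper bound $|W\cap J|=\tfrac13|W|$ combined with $h\ge9u$ suffices. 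The genuinely tight subcase is $-U\subseteq J$: here either $J=-H$, whence $W\cap J=\vn$, or $[J]\ne[H]$, whence $J\cap H$ is a hyperplane of $H$ and $|W\cap J|\le|J\cap H|=\tfrac13h\le\tfrac12(h-3u)=|W|-u$ by $h\ge9u$; in either case $|W\cap J|+|(-U)\cap J|\le(|W|-u)+u=\tfrac12(h-u)=\tfrac12(h-s)$.

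I expect the main obstacle to be exactly these cone-swallowing configurations, where $X$ sits entirely inside $J$ and the trivial bound $|W\cap J|\le|W|$ is short by $|X|$. The resolution rests on two structural facts: a hyperplane containing the cone $C(U)$ must pass through the origin and so cannot contain the affine hyperplane $H$, and the case $X=-U$ can arise only when $\dim(H/U)\ge2$ --- precisely the dimensional slack that clause~\ref{it:priii} was designed to guarantee.
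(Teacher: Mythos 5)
Your argument is correct and follows essentially the same route as the paper: induct on the dimension, decompose $|A\cap J|$ into its $W$- and $X$-parts, apply the induction hypothesis to $X$ inside $C(U)$ when $X$ is not a hyperplane (equivalently $X\ne -U$), and dispose of the cases $C(U)\subseteq J$ and $X=-U$ by direct counting using $\dim(H/U)\ge 2$. The paper merely organises the cases slightly differently (a single crude bound covers $\dim(H/U)\ge 2$, $J\ne H$, making your finer analysis of how $J$ meets $[U]$ and $-U$ unnecessary), but the substance is the same.
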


\begin{proof}
	Let $A$ be derived from $(H, U, W, X)$.
	There are three special cases we would like to address before 
	completing the argument by induction on $\dim(V)$. 
	\begin{enumerate}[label=\nlabel]
		\item If $\dim(H/U)\ge 2$ and $J\ne H$, then the claim 
			follows from 
						\[
				|A|=|W|+|X|\le \tfrac12(|H|-|U|)+|U|\le \tfrac59|H|
			\]
						and $|A\cap J|\le |H\cap J|+|X|\le \frac49 |H|$.		
		\item Suppose next that $X$ is a hyperplane in $C(U)$.
			By Definition~\ref{dfn:prim}\ref{it:prb} we have $X=-U$ 
			and $\dim(H/U)\ge 2$. So we can assume $J=H$, as otherwise
			the previous case applies. Now we have indeed 
						\[
				|A|+|A\cap H|=2|W|+|U|=|H|\,.
			\]
			 		\item Suppose finally that $C(U)\subseteq J$. 
			This yields $J\ne H$ and in view of the first case we 
			can assume $\dim(H/U)=1$. Now $C(U)$ is a hyperplane and 
		 	we actually have $J=C(U)$ as well as 
			$|A|+|A\cap J|=|U|+2|X|\le 3|U|=|J|$.
	\end{enumerate}
	
	We are now ready for the induction step, where we assume that 
	none of the above cases applies. As $C(U)\cap J$ is either a 
	hyperplane in $C(U)$ or empty and $X$ fails to be a hyperplane,
	the induction hypothesis tells us $|X|+|X\cap J|\le |U|$,
	whence 
		\[
		|A|+|A\cap J|\le 2|W|+|U|=|H|\,. \qedhere
	\]
	\end{proof}

\begin{fact}\label{f:25}
	If $U\subseteq H$ are two affine ternary spaces 
	with $\dim(H/U)\ge 2$, then every $(H, U)$-half 
	contains an affine space of dimension $\dim(U)+1$. 
\end{fact}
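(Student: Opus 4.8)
\emph{Proof proposal.} The plan is to factor out the period subspace $[U]$ and thereby reduce the statement to a purely combinatorial fact about a single ternary affine plane. Write $q\colon V\to V/[U]$ for the canonical projection and set $\bar H=q(H)$. Since $U$ is a union of $[U]$-cosets, its image is a single point $\bar U\in\bar H$, and $\bar H$ is an affine space of dimension $d:=\dim(H/U)\ge 2$, which I regard as a $d$-dimensional $\FF_3$-vector space with origin $\bar U$. As $W+[U]=W$, the half $W$ is exactly the preimage in $H$ of $\bar W:=q(W)$, and the two demands in Definition~\ref{dfn:halves} translate, via the remark following it, into the assertion that $\bar W$ avoids the origin and contains precisely one point of each antipodal pair $\{x,-x\}$ with $x\neq 0$. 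It therefore suffices to find an affine line $L\subseteq\bar W$, for then its preimage $(q|_H)^{-1}(L)$ is an affine subspace of $H$ of dimension $\dim(U)+1$ sitting inside $W$.

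Next I would cut down the dimension. Choosing any $2$-dimensional linear subspace $P\le\bar H$ (possible since $d\ge 2$) and intersecting, I note that $P$ is invariant under $x\mapsto -x$, so $\bar W\cap P$ still meets every antipodal pair of $P$ in exactly one point and misses the origin; in other words it is a `half' of the plane $P\cong\FF_3^2$. A line of $P$ lying inside $\bar W\cap P$ is a line of $\bar H$ lying inside $\bar W$, so the whole problem reduces to the case $d=2$.

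The remaining case is the crux. Here $\bar W$ has four points, one from each of the four antipodal pairs of $\FF_3^2\setminus\{0\}$ (equivalently, one nonzero point from each of the four lines through the origin), and since three distinct points of a ternary space are collinear exactly when they sum to $0$, I must produce three of these four points summing to zero. The hypothesis that $\bar W$ is a half is essential: a general four-element set may well contain no line, for instance $\{0,e_1,e_2,e_1+e_2\}$ is a cap. My intended route is to show that the sum $s$ of all four points of $\bar W$ is itself a point of $\bar W$; granting this, writing $s=w$ for one of the four points leaves the other three summing to $s-w=0$, hence collinear and contained in $\bar W$. Fixing representatives $v_1,\dots,v_4$ of the four directions and writing the points of $\bar W$ as $\sigma_iv_i$ with $\sigma_i\in\{\pm1\}$, the claim $s\in\bar W$ becomes a finite check over the $2^4$ sign patterns---only eight of them up to the global symmetry $\sigma\mapsto-\sigma$---and this short verification is the only non-formal ingredient; everything else is the quotient bookkeeping described above.
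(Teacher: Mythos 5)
Your proposal is correct and follows essentially the same route as the paper, which likewise reduces to the special case $H=\FF_3^2$, $U=\{(0,0)\}$ and finishes with a short case analysis; your quotient-and-restrict bookkeeping and the observation that the sum of the four points of the planar half again lies in the half (so the remaining three sum to zero and form a line) is a valid, slightly more structured way of organizing that same finite check, and it does verify over all sign patterns.
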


\begin{proof}
	The claim reduces to its special 
	case $H=\FF_3^2$ and $U=\{(0, 0)\}$, which is easily 
	handled by means of a quick case analysis.
\end{proof}

\begin{lem}\label{lem:26}
	Every primitive set $A$ that fails to be a hyperplane 
	contains an affine space whose dimension exceeds that 
	of~$\Sym(A)$.
\end{lem}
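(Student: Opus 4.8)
The plan is to argue by induction on $\dim(V)$, following the recursive formation of the primitive set $A$.

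The plan is to argue by induction on $n=\dim(V)$, exploiting the recursive formation of $A$ and splitting into two cases according to the relative dimension $\dim(H/U)$. Since $A$ is not a hyperplane, it falls under Definition~\ref{dfn:prim}\ref{it:prb}, so I may write $A$ as derived from $(H,U,W,X)$. First I would record the benchmark to beat: by Lemma~\ref{lem:21} we have $\Sym(A)=\Sym(X)\subseteq[U]$, whence $\dim\Sym(A)\le\dim(U)$. It therefore suffices to exhibit an affine subspace of $A$ of dimension at least $\dim(U)+1$.

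When $\dim(H/U)\ge 2$, no recursion is needed: Fact~\ref{f:25} produces an affine subspace of dimension $\dim(U)+1$ inside the half $W$, and since $W\subseteq A$ this already exceeds $\dim\Sym(A)$. The substantial case is $\dim(H/U)=1$, where I would instead pass to $X$. The key observation is that $X$ cannot be a hyperplane of $C(U)$: condition~\ref{it:prii} confines $X$ to $U\cup(-U)$, so projecting to the quotient $C(U)/[U]\cong\FF_3$ shows that any $\dim(U)$-dimensional affine subspace of $C(U)$ avoiding $[U]$ must equal $U$ or $-U$; the possibility $X=U$ is excluded by condition~\ref{it:priv}, as it would give $X\cap(-U)=\vn$ and hence $\aff(X\cap(-U))=\vn\ne -U$, leaving $X=-U$, which is in turn forbidden by condition~\ref{it:priii} once $\dim(H/U)=1$.

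Consequently $X$ is a primitive, non-hyperplane subset of the ternary space $C(U)$, whose dimension $\dim(U)+1$ is strictly smaller than $\dim(V)=\dim(U)+2$. The induction hypothesis then supplies an affine subspace $Y\subseteq X$ with $\dim(Y)>\dim\Sym(X)$, and since $Y\subseteq X\subseteq A$ and $\Sym(X)=\Sym(A)$, this $Y$ is exactly what is required. The base of the induction causes no difficulty, because non-hyperplane primitive sets first occur in dimension three and are in any event covered by the case $\dim(H/U)\ge 2$.

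I expect the main obstacle to be the middle step, namely verifying that in the case $\dim(H/U)=1$ the set $X$ fails to be a hyperplane, so that the induction hypothesis genuinely applies to it. Everything else amounts to bookkeeping with dimensions, but this reduction is the one place where conditions~\ref{it:prii}--\ref{it:priv} of Definition~\ref{dfn:prim} must all be brought to bear at once.
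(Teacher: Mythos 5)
Your proof is correct and follows essentially the same route as the paper: induction on the ambient dimension, Lemma~\ref{lem:21} to identify $\Sym(A)$ with $\Sym(X)\subseteq[U]$, and Fact~\ref{f:25} to find a $(\dim(U)+1)$-dimensional affine space inside $W$ in the non-inductive case. The only cosmetic difference is that you split on $\dim(H/U)$ whereas the paper splits on whether $X$ is a hyperplane (deducing $X=-U$ and $\dim(H/U)\ge 2$ in that event); your explicit verification that $X$ cannot be a hyperplane when $\dim(H/U)=1$ is the same observation read contrapositively.
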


\begin{proof}
	Let $A$ be derived from $(H, U, W, X)$. As long as $X$ 
	fails to be a hyperplane Lemma~\ref{lem:21} allows us 
	to appeal to an induction on the dimension of the ambient 
	ternary space. On the other hand, if $X$ is a hyperplane, 
	we can conclude $X=-U$ and $\dim(H/U)\ge 2$ 
	from Definition~\ref{dfn:prim}\ref{it:prb}. 
	But now Fact~\ref{f:25} yields the desired affine space 
	within~$W$. 
\end{proof}

The proof of Theorem~\ref{thm:main} will proceed by 
induction on the dimension of the ambient space~$V$. Since 
the intersection of a maximal sum-free subset of~$V$ with 
a proper subspace~$J$ of~$V$ is in general only a sum-free 
subset of~$J$ but not necessarily a maximal one
(even if this intersection is very dense), we need to study 
subsets of primitive sets as well. For brevity we shall call 
them {\it subprimitive} in the sequel. Utilising the previous 
lemma one can show that sufficiently large subprimitive sets 
have density at least $\frac56$ in certain affine spaces. 

\begin{lem}\label{lem:kaff}
	Let $B$ be a subprimitive subset of a ternary vector space $V$
	which is not contained in a hyperplane.
	If $|B|>\frac16(|V|+3^{k-1})$ holds for some positive integer $k$, 
	then there exists a $k$-dimensional affine subspace~$E$ 
	of~$V$ such that $|B\cap E|\ge \frac16(5|E|+3)$.
\end{lem}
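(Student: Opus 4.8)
The plan is to fix a primitive superset of $B$ and convert the lower bound on $|B|$ into a local density statement. Since $B$ is subprimitive, I would fix a primitive set $A\subseteq V$ with $B\subseteq A$. Because $B$ is not contained in a hyperplane, neither is $A$, so both Lemma~\ref{lem:13} and Lemma~\ref{lem:26} apply to $A$. Writing $s=\dim\Sym(A)$, Lemma~\ref{lem:13} gives $|A|=\tfrac16(|V|+3^{s+1})$, while Lemma~\ref{lem:26} furnishes an affine subspace $F\subseteq A$ whose dimension exceeds $s$, i.e. $\dim F\ge s+1$.

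Next I would estimate how much of $F$ the set $B$ can avoid. Since $B\subseteq A$, the hypothesis on $|B|$ yields
\[
	|A\sm B|=|A|-|B|<\tfrac16\big(|V|+3^{s+1}\big)-\tfrac16\big(|V|+3^{k-1}\big)=\tfrac16\big(3^{s+1}-3^{k-1}\big).
\]
As the left-hand side is nonnegative, this forces $3^{s+1}>3^{k-1}$, hence $s+1\ge k$ and in particular $\dim F\ge k$. Moreover, $F\subseteq A$ gives $|F\sm B|\le|A\sm B|<\tfrac16\,3^{s+1}\le\tfrac16\,|F|$, whence $|B\cap F|>\tfrac56\,|F|$; so $B$ is already very dense on the $(\ge k)$-dimensional affine space $F$.

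To finish, I would descend from $F$ to a subspace of dimension exactly $k$ by averaging. Averaging $|B\cap E|$ over all $k$-dimensional affine subspaces $E$ of $F$, and using that by homogeneity each point of $F$ lies in the same number of such $E$, the average equals $3^{k-\dim F}\,|B\cap F|>\tfrac56\,3^{k}$. Hence some $E$ satisfies $|B\cap E|>\tfrac56\,3^{k}=\tfrac56\,|E|$. Since $3^{k}\equiv 3\pmod 6$ for $k\ge 1$, one has $5\cdot 3^{k}\equiv 3\pmod 6$, so any integer strictly exceeding $\tfrac56\,3^{k}$ is at least $\tfrac16(5\cdot 3^{k}+3)=\tfrac16(5|E|+3)$, as required.

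The crux of the argument, and the place where the earlier theory does the work, is the combination in the first two paragraphs: Lemma~\ref{lem:26} supplies an affine subspace of $A$ whose dimension is automatically at least $k$, and the cardinality formula of Lemma~\ref{lem:13} shows that $B$ can miss at most a $\tfrac16$-fraction of it. I expect this interplay to be the only genuine obstacle; once $B$ is known to be $\tfrac56$-dense on a $(\ge k)$-dimensional affine space, the descent to dimension exactly $k$ is a routine averaging, and the extra $+3$ in the target bound is secured by the elementary divisibility observation $5\cdot 3^{k}\equiv 3\pmod 6$.
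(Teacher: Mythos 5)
Your proof is correct and follows essentially the same route as the paper's: fix a primitive superset $A$, combine the cardinality formula of Lemma~\ref{lem:13} with the affine subspace supplied by Lemma~\ref{lem:26} to show $B$ is $\tfrac56$-dense on an affine space of dimension at least $k$, then descend to dimension exactly $k$ and upgrade the bound via $|E|\equiv 3\pmod 6$. The only cosmetic difference is that the paper partitions that affine space into disjoint $k$-dimensional pieces rather than averaging over all $k$-dimensional affine subspaces.
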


\begin{proof}
	Let $A$ be a primitive superset of $B$. In view of 
		\[
		\tfrac16(|V|+3|\Sym(A)|)=|A|\ge |B|>\tfrac16(|V|+3^{k-1})
	\]
		the dimension $\rho$ of $\Sym(A)$ is at least $k-1$. 
	Since $A$ cannot be a hyperplane, Lemma~\ref{lem:26} yields 
	a $(\rho+1)$-dimensional affine space $Q\subseteq A$. Because of 
		\[
		|A\sm B|
		<
		\tfrac16(|V|+|Q|)-\tfrac16|V|
		=
		\tfrac16|Q|
	\]
		we have 
		\[
		|B\cap Q|\ge |Q|-|A\sm B|>\tfrac56 |Q|\,.
	\]
		
	As $Q$ can be expressed as a disjoint union of $k$-dimensional 
	affine spaces, this leads to some such space $E$ satisfying 
	$|B\cap E|> \frac56|E|$. Due to $|E|\equiv 3\pmod{6}$
	we actually have $|B\cap E|\ge\frac16(5|E|+3)$. 
\end{proof}

\begin{lem}\label{lem:28} 
	Let $A$ be a primitive subset of a ternary vector space $V$,
	but not a hyperplane. If 
	the subset $B\subseteq A$ satisfies $|B|>\frac16|V|$
	and the hyperplane $J$ is disjoint to $B$, then $J\cap A=\vn$
	holds as well.
\end{lem}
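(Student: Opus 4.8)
The plan is to analyse the intersection $A\cap J$ through the symmetry group $S=\Sym(A)$ and the linear hyperplane $[J]=J-J$ parallel to $J$, splitting into the two cases $S\subseteq[J]$ and $S\not\subseteq[J]$. The one quantitative ingredient I would extract at the outset is a density bound on $A\cap J$: since $B\subseteq A$ is disjoint from $J$ we have $B\subseteq A\sm J$, so Lemma~\ref{lem:13} gives
\[
	|A\cap J|\le|A|-|B|<|A|-\tfrac16|V|=\tfrac12|S|.
\]

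In the case $S\subseteq[J]$ I would argue by invariance: for every $s\in S$ we have both $A+s=A$ and (because $s\in[J]$) $J+s=J$, so $A\cap J$ is a union of cosets of $S$. If it were nonempty its cardinality would be a positive multiple of $|S|$, hence at least $|S|>\tfrac12|S|$, contradicting the displayed inequality; thus $A\cap J=\vn$.

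The case $S\not\subseteq[J]$ is where I expect the genuine difficulty, since there $A\cap J$ turns out to be \emph{nonempty}, so the conclusion can only hold by showing this case is incompatible with the hypotheses. Choosing $s\in S\sm[J]$, translation by $s$ is a $3$-cycle on the three cosets of $[J]$ and preserves $A$, so it permutes the three slices of $A$ cut out by these cosets transitively; hence they all have size $\tfrac13|A|$ and in particular $|A\cap J|=\tfrac13|A|$. Feeding this into Lemma~\ref{lem:24}, the bound $|A|+|A\cap J|\le|J|=\tfrac13|V|$ becomes $\tfrac43|A|\le\tfrac13|V|$, so $|A|\le\tfrac14|V|$ and therefore
\[
	|B|\le|A\sm J|=\tfrac23|A|\le\tfrac16|V|,
\]
contradicting $|B|>\tfrac16|V|$. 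This rules out $S\not\subseteq[J]$, so the first case always applies and $A\cap J=\vn$.
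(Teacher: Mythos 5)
Your proof is correct, and its skeleton coincides with the paper's: both arguments hinge on the identity $|A|=\frac16(|V|+3|\Sym(A)|)$ from Lemma~\ref{lem:13}, which turns the hypothesis $|B|>\frac16|V|$ into the key inequality $|A\setminus B|<\frac12|\Sym(A)|$, and both then split according to whether $\Sym(A)\subseteq[J]$. Your first case is the paper's first case verbatim. In the second case you diverge: after observing (as the paper also does, translate by translate) that $A$ is equidistributed over the three cosets of $[J]$, you close the argument by importing Lemma~\ref{lem:24} to get $|A|\le\frac14|V|$ and hence $|B|\le\frac23|A|\le\frac16|V|$, whereas the paper stays inside the $|A\setminus B|<\frac12|\Sym(A)|$ framework: it notes that $A$ is a union of at least two translates of $T=\Sym(A)$ (a single translate would make $A$ an affine subspace, hence by Lemma~\ref{lem:13} a hyperplane, which is excluded), each meeting $J$ in $\frac13|T|$ points, so that $|A\setminus B|\ge|A\cap J|\ge\frac23|T|$, a contradiction. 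The trade-off is mild: your route leans on the separately proved Lemma~\ref{lem:24} but avoids the small side argument that $A$ is not a single translate of its symmetry group; the paper's route is self-contained modulo Lemma~\ref{lem:13}. Both are sound.
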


\begin{proof}
	Setting $T=\Sym(A)$ we have
		\[
		|A\sm B|
		<
		\tfrac16(|V|+3|T|)-\tfrac16|V|
		=
		\tfrac12|T|\,,
	\]
		for which reason every translate $T'$ of $T$ contained in $A$
	satisfies $|T\cap B|>\frac12|T|$. This takes immediate care 
	of the case $[T]\subseteq [J]$, where $J$ is a union of translates
	of $T$. On the other hand, if $[T]\not\subseteq [J]$, then
	$|T'\cap J|=\frac13|T|$ holds for every translate~$T'$ of~$T$. 
	So, if $A$ consists of at least two such translates, we reach the 
	contradiction $|A\sm B|\ge |A\cap J|\ge \frac23|T|$. But, as $A$ 
	fails to be a hyperplane, it cannot be a single translate of $T$ 
	either. 
\end{proof}

\section{Inductive arguments}

Let us recapitulate a fundamental result of additive combinatorics 
due to Kneser~\cites{Kn53, Kn55}.

\begin{thm}[Kneser]\label{thm:Kn}
	If $A$, $B$ are two finite nonempty subsets of an abelian 
	group~$G$ and $K=\Sym(A+B)$, then 
		\[
		|A+B|\ge |A+K|+|B+K|-|K|\,.
	\]
	\end{thm}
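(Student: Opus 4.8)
The plan is to follow the classical route via the \emph{Dyson transform} (or $e$-transform) together with a reduction modulo the stabiliser, organised around a minimal counterexample. Suppose the asserted inequality fails, and among all failing pairs $(A,B)$---ranging over all abelian groups and all finite nonempty $A,B$---choose one for which $|A+B|$ is smallest and, subject to that, $|B|$ is smallest. Write $K=\Sym(A+B)$; since every element of $K$ is a difference of two elements of the finite set $A+B$, the subgroup $K$ is finite.

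First I would force $K=\{0\}$. As $A+B$ is a union of cosets of $K$, the projection $\pi\colon G\to G/K$ satisfies $|A+B|=|K|\,|\pi(A)+\pi(B)|$, $|A+K|=|K|\,|\pi(A)|$ and $|B+K|=|K|\,|\pi(B)|$; moreover $\Sym(\pi(A)+\pi(B))$ is trivial, because its preimage in $G$ stabilises $A+B$ and therefore equals $K$. If $K\ne\{0\}$, then $|\pi(A)+\pi(B)|=|A+B|/|K|<|A+B|$, so minimality yields the theorem for $(\pi(A),\pi(B))$, which with a trivial stabiliser reads $|\pi(A)+\pi(B)|\ge|\pi(A)|+|\pi(B)|-1$; multiplying back by $|K|$ restores the full inequality, a contradiction. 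Hence a minimal counterexample has $K=\{0\}$ and $|A+B|\le|A|+|B|-2$, so in particular $|A|,|B|\ge2$.

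Next I would bring in the transform $A_e=A\cup(B+e)$ and $B_e=B\cap(A-e)$, whose two basic properties are $A_e+B_e\subseteq A+B$ and $|A_e|+|B_e|=|A|+|B|$. A short computation shows that an element $e\in A-B$ with $\varnothing\ne B_e\subsetneq B$ fails to exist only if $B-B\subseteq\Sym(A)$, i.e. only if $B$ lies in a single coset of $\Sym(A)$; but then $A+B$ is a translate of $A$ with $\Sym(A+B)\supseteq B-B$, which together with $K=\{0\}$ and $|B|\ge2$ is impossible. So a suitable $e$ exists, giving $\varnothing\ne B_e\subsetneq B$ and hence $|B_e|<|B|$. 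If $A_e+B_e=A+B$, then $(A_e,B_e)$ is again a counterexample with the \emph{same} sumset---so the same $K=\{0\}$ and the same deficiency at least $2$---but strictly smaller $|B_e|$, contradicting minimality; this disposes of the sumset-preserving case cleanly.

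The remaining and genuinely hard case is $A_e+B_e\subsetneq A+B$. Here $|A_e+B_e|<|A+B|$, so minimality grants Kneser's inequality for $(A_e,B_e)$; since the deficiency $|A_e|+|B_e|-|A_e+B_e|\ge2$ is inherited, the stabiliser $H_e=\Sym(A_e+B_e)$ cannot be trivial, as a trivial $H_e$ would give $|A_e+B_e|\ge|A|+|B|-1$, contradicting $|A_e+B_e|\le|A|+|B|-2$. The step I expect to be the main obstacle is to convert this into a contradiction with $K=\{0\}$: one must show that the nontrivial periodicity of the smaller sumset $A_e+B_e$ forces $A+B$ itself to be periodic. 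Carrying out this propagation---by tracking how the $H_e$-cosets comprising $A_e+B_e$ sit inside $A+B$ and re-running the transform---is the technical heart of Kneser's theorem, everything else reducing to the two identities for the transform and the scaling relations under $\pi$. Since the present paper needs this result only as an imported tool, it would be equally legitimate simply to cite Kneser's original papers for this last argument.
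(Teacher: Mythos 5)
The first thing to say is that the paper does not prove this statement at all: Kneser's theorem is imported as a standard tool with a citation to \cites{Kn53, Kn55}, so any proof you supply is necessarily going beyond what the paper does. Your reductions are all sound as far as they go: $K$ is finite because $K\subseteq (A+B)-(A+B)$; passing to $G/K$ rescales all three quantities by $|K|$ and trivialises the stabiliser, so a minimal counterexample has $K=\{0\}$, deficiency $|A|+|B|-|A+B|\ge 2$, and hence $|A|,|B|\ge 2$; the two identities $A_e+B_e\subseteq A+B$ and $|A_e|+|B_e|=|A|+|B|$ are correct; the nonexistence of an $e\in A-B$ with $\vn\ne B_e\subsetneq B$ does force $B-B\subseteq\Sym(A)$ and thence $\Sym(A+B)\supseteq B-B\ne\{0\}$; and the sumset-preserving case is correctly eliminated by the secondary minimisation of $|B|$.

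Nevertheless, as you yourself flag, this is not a proof. The case $A_e+B_e\subsetneq A+B$ with $H_e=\Sym(A_e+B_e)$ nontrivial is not a loose end but the place where essentially all of the work in every known proof of Kneser's theorem is concentrated: one has to analyse how $A+B$ decomposes into $H_e$-cosets, show that at most one such coset is met in a proper nonempty subset, and run a delicate secondary induction (or Kneser's original covering argument) in the quotient $G/H_e$. Moreover, the sentence ``the nontrivial periodicity of $A_e+B_e$ forces $A+B$ itself to be periodic'' is not a true implication as literally stated---periodicity of a subset does not pass to a superset---so the missing step is not merely a routine verification but the actual content of the theorem. Since the paper treats the result as a black box, citing Kneser (as you propose at the end) is the appropriate course here; but what you have written should then be labelled as a reduction together with a citation, not offered as a proof.
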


If $G$ is finite, then $|G|$ and the right side are divisible 
by $|K|$; this immediately implies the 
following well-known fact.    

\begin{cor}\label{cor:B}
	If two subsets $A$, $B$ of a finite abelian group $G$ satisfy 
	$|A|+|B|>|G|$, then $A+B=G$. \qed
\end{cor}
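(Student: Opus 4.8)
The plan is to deduce Corollary~\ref{cor:B} directly from Kneser's theorem (Theorem~\ref{thm:Kn}) together with the divisibility remark that precedes the statement. Set $K=\Sym(A+B)$, which is a subgroup of the finite abelian group $G$. The strategy is to show that the hypothesis $|A|+|B|>|G|$ forces $A+B$ to be all of $G$ by proving that $A+B$ is a union of cosets of $K$ that exhausts $G$.

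First I would record that $A+B$ is a union of cosets of $K$, since $K=\Sym(A+B)$ stabilises $A+B$; hence $|K|$ divides $|A+B|$, and of course $|K|$ divides $|G|$. Next I would invoke Kneser's inequality
\[
	|A+B|\ge |A+K|+|B+K|-|K|\,.
\]
Since $A+K$ and $B+K$ are themselves unions of $K$-cosets and contain $A$ and $B$ respectively, we have $|A+K|\ge |A|$ and $|B+K|\ge |B|$. Combining this with the hypothesis gives
\[
	|A+B|\ge |A|+|B|-|K|>|G|-|K|\,.
\]

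The final step is to upgrade the strict inequality $|A+B|>|G|-|K|$ to the equality $A+B=G$ using divisibility. Because $|A+B|$ and $|G|$ are both divisible by $|K|$, so is their difference; thus $|G|-|A+B|$ is a nonnegative multiple of $|K|$. But the inequality above says $|G|-|A+B|<|K|$, so this multiple must be $0$, giving $|A+B|=|G|$ and therefore $A+B=G$. There is no real obstacle here: the only point requiring a moment's care is the observation that $A+K\supseteq A$ (and likewise for $B$) so that Kneser's right-hand side is at least $|A|+|B|-|K|$, and the interplay between the strict cardinality bound and the $|K|$-divisibility that pins down the multiple as zero. This is exactly the reasoning the excerpt alludes to in the sentence immediately preceding the corollary.
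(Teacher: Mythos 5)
Your proof is correct and follows exactly the route the paper intends: Kneser's inequality with $K=\Sym(A+B)$, the bounds $|A+K|\ge|A|$ and $|B+K|\ge|B|$, and the divisibility of $|A+B|$ and $|G|$ by $|K|$ to upgrade $|A+B|>|G|-|K|$ to $A+B=G$. The paper leaves this as an immediate consequence of the remark preceding the corollary, and your write-up simply makes that one-line argument explicit.
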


Our other applications of Kneser's theorem factorise through the 
following statement. 

\begin{lem}\label{lem:Kn}
	Let $A$, $B$, $C$ be three subsets of a finite abelian group~$G$
	such that $A+B$ is disjoint to $C$. If $2|A|+2|B|+|C|>2|G|$
	and $C\ne\vn$, then there is a subgroup $K$ of $G$ such that 
	$|A+K|+|B+K|=|G|$ and $C$ is contained in a single coset of $K$. 
\end{lem}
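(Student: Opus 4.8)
The plan is to take $K=\Sym(A+B)$ and to squeeze the quantity $|A+K|+|B+K|$ between two bounds that force it to equal $|G|$ exactly. First I would invoke Kneser's theorem (Theorem~\ref{thm:Kn}) in the form $|A+B|\ge |A+K|+|B+K|-|K|$ and combine it with the hypothesis that $A+B$ is disjoint from $C$, which gives $|A+B|\le |G|-|C|$. Eliminating $|A+B|$ yields the upper bound $|A+K|+|B+K|\le |G|-|C|+|K|$. For a matching lower bound I would use $|A+K|\ge|A|$ and $|B+K|\ge|B|$ together with the density hypothesis $2|A|+2|B|+|C|>2|G|$, obtaining $|A+K|+|B+K|>|G|-\tfrac12|C|$. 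Comparing the two bounds already forces $|K|>\tfrac12|C|>0$, so in particular $K$ is nontrivial.

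The crucial point is then a divisibility argument. Since $A+K$ and $B+K$ are unions of cosets of $K$, the sum $|A+K|+|B+K|$ is a multiple of $|K|$, as is $|G|$; so I may write $|A+K|+|B+K|=|G|+s|K|$ with $s\in\ZZ$. The two bounds translate into $-\tfrac12|C|<s|K|\le |K|-|C|$. Here $s\le -1$ is excluded by the lower estimate (using $|K|>\tfrac12|C|$), while $s\ge 1$ is excluded by the upper estimate (using $|C|>0$); equivalently, dividing by $|K|$ gives $-\tfrac{|C|}{2|K|}<s\le 1-\tfrac{|C|}{|K|}$, and since $0<|C|/|K|<2$ this range can contain no integer other than $0$. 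Hence $s=0$, that is $|A+K|+|B+K|=|G|$, which is the first assertion.

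It then remains to place $C$ inside a single coset. Having $|A+K|+|B+K|=|G|$, Kneser now gives $|A+B|\ge |G|-|K|$, while disjointness still gives $|A+B|\le |G|-|C|<|G|$; since $A+B$ is $K$-periodic its cardinality equals $|G|-m|K|$ for some integer $m\ge 1$, and the lower bound forces $m=1$. Thus $G\sm(A+B)$ is exactly one coset of $K$, and as $C$ avoids $A+B$ it lies in that single coset. I expect the main obstacle to be the passage from an inequality to the exact equality $|A+K|+|B+K|=|G|$: Kneser supplies only a one-sided estimate, and it is the interplay between the density hypothesis, the disjointness bound, and the $|K|$-divisibility of all the quantities involved that collapses the admissible range for $s$ to the single value $0$.
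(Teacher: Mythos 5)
Your argument is correct and follows essentially the same route as the paper: both take $K=\Sym(A+B)$, play Kneser's theorem off against the disjointness of $A+B$ and $C$, and use the $|K|$-divisibility of $|A+K|+|B+K|$ and of $|G|$ to upgrade the resulting two-sided estimate to the exact identity $|A+K|+|B+K|=|G|$. The one point you skip is the degenerate case where one of $A$, $B$ is empty, which the hypotheses do permit (e.g.\ $B=\varnothing$ and $|A|>|G|-\tfrac12|C|$); there Kneser's theorem, stated for nonempty sets, is not applicable and the paper dispatches the case separately by taking $K=G$. Your final step locating $C$ differs only cosmetically from the paper's --- you show that the complement of $A+B$ is a single coset of $K$, whereas the paper bounds $|C+K|\le|K|$ directly --- and the two are equivalent.
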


\begin{proof}
	Notice that $A=B=\vn$ would yield the contradiction $|C|>2|G|$. 
	If exactly one of $A$ and $B$ is empty, we can simply take $K=G$. 		So we may assume from now on that $A$ and $B$ are nonempty. 
	It will eventually turn out that $K=\Sym(A+B)$ is as required. 
	Since $A+B$ is a union of cosets of $K$, it is disjoint to $C+K$
	and, therefore, Kneser's theorem yields 
		\begin{equation}\label{eq:3106}
		|G|
		\ge 
		|A+B|+|C+K|
		\ge 
		|A+K|+|B+K|+|C+K|-|K|\,,	
	\end{equation}
		whence 
		\begin{equation}\label{eq:3109}
		|G|
		\ge
		|A+K|+|B+K|\,.
	\end{equation}
		Furthermore,~\eqref{eq:3106} entails
		\begin{align*}
		2|A+K|+2|B+K|+|C+K|
		&\ge
		2|A|+2|B|+|C|
		>
		2|G| \\
		&\ge
		|G|+|A+K|+|B+K|+|C+K|-|K|\,,
	\end{align*}
		i.e., $|A+K|+|B+K|>|G|-|K|$. As both sides are 
	multiples of $|K|$, this shows in combination with~\eqref{eq:3109}
	that $|A+K|+|B+K|=|G|$. In view of~\eqref{eq:3106} we can 
	conclude $|C+K|\le |K|$ and, consequently, $C$ is indeed 
	contained in a single coset of $K$. 	
\end{proof} 

The foregoing lemma will be used to handle a major case 
in our inductive proof of Theorem~\ref{thm:main}. We will
denote the statement that a given ternary vector space $V$ satisfies 
the induction hypothesis by $\Phi(V)$. More precisely, $\Phi(V)$ means 
that if $V'$ refers to a ternary vector space with $\dim(V')<\dim(V)$,
then every sum-free set $A'\subseteq V'$ with $|A'|>\frac16|V'|$ 
is subprimitive. Since Theorem~\ref{thm:main} is clear 
when $\dim(V)\le 2$, we will know that $\Phi(\FF_3^3)$ is true 
when looking at the three-dimensional case later. 

\begin{prop}\label{prop:33}
	Given a finite ternary vector space $V$ satisfying $\Phi(V)$, 
	let $A$ be a sum-free subset and $H$ a hyperplane not containing
	the origin. 
	If $|A|>\frac16|V|$, $[H]\cap A=\vn$, 
	and $\aff(A\cap(-H))$ is a proper affine subspace of $-H$, 
	then~$A$ is subprimitive. 
\end{prop}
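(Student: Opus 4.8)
The plan is to reconstruct from the data a quadruple $(H,U,W,X)$ witnessing primitivity in the sense of Definition~\ref{dfn:prim}\ref{it:prb}. Write $A_1=A\cap H$ and $A_2=A\cap(-H)$, so that $A=A_1\dcup A_2$ because $[H]\cap A=\vn$. Projecting along $V\to V/[H]\cong\FF_3$ (with $H\mapsto 1$) the elements of $A$ have image $\pm1$, and a short check of the three admissible image equations shows that sum-freeness of $A$ is \emph{equivalent} to the two cross-conditions $(A_1+A_1)\cap A_2=\vn$ and $(A_2+A_2)\cap A_1=\vn$. The hypothesis says that $-U:=\aff(A_2)$ is a proper affine subspace of $-H$, so I set $U=-\aff(A_2)\subsetneqq H$ and $L=[U]=[-U]$; note that $-U$ is a single coset of $L$, that $C(U)=[U]\dcup U\dcup(-U)$ has dimension $\dim(U)+1<\dim(V)$, and that $A\cap C(U)=(A_1\cap U)\dcup A_2$ avoids $[U]$. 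The target is an $(H,U)$-half $W$ and a primitive $X\subseteq C(U)$ with $A_1\setminus U\subseteq W$, $A_1\cap U\subseteq X\cap U$, and $A_2\subseteq X\cap(-U)$, for then $A\subseteq W\cup X$ is primitive.

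For the set $X$ I would feed the lower-dimensional space $C(U)$ into the induction hypothesis $\Phi(V)$. Once it is arranged that the sum-free set $A\cap C(U)$ is dense enough in $C(U)$, $\Phi(V)$ supplies a primitive superset $X\subseteq C(U)$ of $A\cap C(U)$. Then $X\cap[U]=\vn$ is automatic (every primitive set avoids the linear hyperplane parallel to its defining hyperplane, which for $X\subseteq C(U)$ is $[U]$), giving condition~\ref{it:prii}; and since $A_2\subseteq X\cap(-U)\subseteq-U$ with $\aff(A_2)=-U$, we get $\aff(X\cap(-U))=-U$, which is condition~\ref{it:priv}. The degenerate output $X=-U$ (forcing $\dim(H/U)\ge 2$) must be isolated to secure condition~\ref{it:priii}. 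When instead $A\cap C(U)$ is too sparse to invoke $\Phi(V)$, the piece $A_1\cap U$ is correspondingly small and one can simply take $X=-U$, which is primitive by clause~\ref{it:pra}.

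The heart of the matter is constructing $W$: I must show that the image of $A_1\setminus U$ in the affine space $H/L$ meets at most one class of each antipodal pair about the centre $U$, for then any half completing this partial choice contains $A_1\setminus U$. A violation produces $a,a'\in A_1$ in opposite off-centre $L$-cosets, whence $a+a'\in(A_1+A_1)\cap(-U)$; the first cross-condition gives $a+a'\notin A_2$, so $a+a'\in(-U)\sm A_2$. Crucially this is \emph{not} excluded by sum-freeness alone when $A_2\subsetneqq -U$, so it is precisely the density that must rule it out, and this is where I would deploy Lemma~\ref{lem:Kn}: applied in the group $[H]$ to suitable translates of $A_1,A_1,A_2$ it yields a subgroup $K$ with $|A_1+K|=\tfrac12|H|$ and $A_2$ confined to a single coset of $K$, and I would identify $K=L$ and read off both the half-structure of $A_1$ and the coset-concentration of $A_2$. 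Corollary~\ref{cor:B}, the identity $(-U)+(-U)=U$, and the bound $|A_2|\le|U|$ coming from $U\subsetneqq H$ are the quantitative inputs meant to close the Kneser count.

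Granting $W$ and $X$, the set $P=W\cup X$ is primitive and contains $A$, so $A$ is subprimitive. The step I expect to fight hardest is the third paragraph: the global density $|A|>\tfrac16|V|=\tfrac12|H|$ splits unpredictably between $A_1$ and $A_2$, and a single naive application of Kneser's theorem does \emph{not} reach the threshold of Lemma~\ref{lem:Kn}. I anticipate needing a case split on $\dim(H/U)$ and a genuine interplay of the two cross-conditions (using $\aff(A_2)=-U$ to control $A_2+A_2$ inside $U$ via Theorem~\ref{thm:Kn}) to force $K=L$. The boundary cases governing condition~\ref{it:priii}, together with Fact~\ref{f:25} and Lemma~\ref{lem:26} to handle the possibility $X=-U$, are the secondary technical points.
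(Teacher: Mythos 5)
Your plan matches the paper's strategy in outline (same $U=-\aff(A\cap(-H))$, same target quadruple $(H,U,W,X)$, same key role for Lemma~\ref{lem:Kn} with $\aff(A\cap(-U))=-U$ forcing $K=[U]$), but the decisive quantitative step is missing, and you flag it yourself: you concede that a naive application of Kneser does not reach the threshold of Lemma~\ref{lem:Kn} and only ``anticipate'' a way to close the gap. The paper closes it by a specific chain you do not reproduce. First, for each $y\in[H]\sm[U]$ the sets $A_y=A\cap(U+y)$ satisfy $A_y+A_{-y}\subseteq -U$ with strict inclusion (since $A\cap(-U)\ne\vn$ and $A$ is sum-free), so Corollary~\ref{cor:B} gives $|A_y|+|A_{-y}|\le|U|$; summing over a half-system bounds $|A\cap(H\sm U)|\le\frac12(|H|-|U|)$ and hence yields the density $|A\cap C(U)|>\frac16|C(U)|$ needed to invoke $\Phi(V)$ --- this is the ``arranging'' you postpone. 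Second, and crucially, one applies Lemma~\ref{lem:24} to the primitive superset $X$ with the hyperplane $U$ of $C(U)$ to get $|A\cap C(U)|+|A\cap U|\le|U|$; feeding this back into the count and doubling produces exactly $2|U|<2|A_z|+2|A_{-z}|+|A\cap(-U)|$, which is the hypothesis of Lemma~\ref{lem:Kn} applied \emph{per antipodal pair of cosets} ($A_z$, $A_{-z}$, $A\cap(-U)$ inside the group $[U]$), not to all of $A_1$ inside $[H]$ as you propose. The feedback loop --- using the structural Lemma~\ref{lem:24} for the already-constructed $X$ to sharpen the counting that defines $W$ --- is the idea your proposal lacks; no case split on $\dim(H/U)$ is needed for this part.

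Two smaller points. Your justification of condition~\ref{it:prii} is incorrect as stated: a primitive $X\subseteq C(U)$ derived from clause~\ref{it:prb} avoids $[H']$ for \emph{its own} defining hyperplane $H'$ of $C(U)$, which need not be parallel to $U$; the paper instead deduces $X\cap[U]=\vn$ from Lemma~\ref{lem:28} applied to $B=A\cap C(U)$ (which avoids $[U]\subseteq[H]$ by hypothesis) and $J=[U]$. And there is no ``sparse'' case requiring you to take $X=-U$ by hand: the density of $A\cap C(U)$ in $C(U)$ is unconditional once the half-system bound is in place. The exceptional case that actually needs separate treatment is $\dim(H/U)=1$ with $X=-U$, where condition~\ref{it:priii} fails but $W\dcup(-U)\dcup(U+(-W))$ is a hyperplane covering $A$.
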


\begin{proof}
	Due to the last assumption $U=\aff(H\cap (-A))$ is a proper 
	affine subspace of $H$. If~$U$ should be empty, 
	we have $A\subseteq H$ and the claim is clear. So we can 
	henceforth suppose~${U\ne\vn}$, whence $A\cap (-U)\ne\vn$. 
	We shall show that there exist 
	an $(H, U)$-half $W$ and a primitive subset $X$ of $C(U)$ 
	such that $A$ is contained in $W\dcup X$ ---a set which is 
	either primitive or contained in a hyperplane. 
	
	To this end we set $A_y=A\cap (U+y)$ for every $y\in [H]\sm [U]$.
	Since $A_y+A_{-y}\subseteq -U$ cannot hold with equality, 
	Corollary~\ref{cor:B} informs us that 
		\begin{equation}\label{eq:0042}
		|A_y|+|A_{-y}|\le |U|\,.
	\end{equation}
		By summing this over an appropriate half-system of 
	representatives of $[H]/[U]$ we infer 
	$|A\cap (H\sm U)|\le\frac12(|H|-|U|)$, which in turn leads to 
		\[
		|A\cap C(U)|
		>
		\tfrac16|V|-\tfrac12(|H|-|U|)
	 	=
		\tfrac16|C(U)|\,.
	\]
		
	Thus the inductive hypothesis $\Phi(V)$ yields a primitive 
	subset $X\subseteq C(U)$ covering $A\cap C(U)$.
	If $X$ is a hyperplane, then it needs to be equal to $-U$ 
	and 
		\begin{equation}\label{eq:3702}
		X\cap [H]=\vn
	\end{equation}
		follows. On the other hand, if $X$ is not a hyperplane, then 
	Lemma~\ref{lem:28} applied to $A\cap C(U)$ shows 
	that~\eqref{eq:3702} is still true. 
	
	\begin{clm}
		For every $z\in [H]\sm [U]$ one of the sets $A_z$, $A_{-z}$
		is empty.
	\end{clm}
	
	\begin{proof}
		It follows from~\eqref{eq:0042} that
				\[
			|A\cap (H\sm U)|
			\le 
			\tfrac12(|H|-3|U|)+|A_z|+|A_{-z}|\,.
		\]
				In view of $A\subseteq (H\sm U)\dcup C(U)$ and $|A|>\frac16|V|$
		this yields  
				\begin{equation}\label{eq:3131}
			\tfrac32|U|
			<
			|A_z|+|A_{-z}|+|A\cap C(U)|\,.
		\end{equation}
				Next we contend that
				\begin{equation}\label{eq:3132}
			|A\cap C(U)|+|A\cap U|\le |U|\,.
		\end{equation}
				If $A\cap C(U)$ fails to be a hyperplane, this can be seen
		by applying Lemma~\ref{lem:24} to $C(U)$, $X$ here in place 
		of $V$, $A$ there. In the remaining case we have $X=-U$ 
		and~\eqref{eq:3132} is obvious. 
		
		By doubling~\eqref{eq:3131} and taking~\eqref{eq:3132} 
		into account we learn 
				\[
			2|U|<2|A_z|+2|A_{-z}|+|A\cap (-U)|\,.
		\]
				Now Lemma~\ref{lem:Kn} shows that there is a subspace~$K$ 
		of~$[U]$ such that $|A_z+K|+|A_{-z}+K|=|U|$ and $A\cap (-U)$ 
		is contained in a translate of $K$. 
		Appealing to~$\aff((-U)\cap A)=-U$ we infer $K=[U]$
		and, therefore, one of $A_z$, $A_{-z}$ has to be empty. 
	\end{proof}
	
	As the claim demonstrates, there is an $(H, U)$-half $W$
	covering $A\cap (H\sm U)$. Clearly, if 
	Definition~\ref{dfn:prim}\ref{it:prb}\ref{it:priii} is 
	satisfied, then~$A$
	is contained in the primitive set derived from $(H, U, W, X)$. 
	It remains to treat the exceptional case that $\dim(H/U)=1$ 
	and $X=-U$. Now 
		\[
		W\dcup(-U)\dcup (U+(-W))
	\]
		is a hyperplane in $V$ covering $A$, so that $A$ is again 
	subprimitive. 
\end{proof}

The three-dimensional case of Theorem~\ref{thm:main} was first proved  
by Lev (see~\cite{VL05}*{Lemma~3}) using an exhaustive case analysis.
For the sake of completeness we would like to mention an 
alternative argument based on Proposition~\ref{prop:33}.
   
\begin{cor}[Lev]\label{cor:3}
	Every sum-free set $A\subseteq \FF_3^3$ with $|A|\ge 5$ 
	is subprimitive and contains a line. 
\end{cor}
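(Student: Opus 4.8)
The plan is to apply Proposition~\ref{prop:33}, which is available because $\Phi(\FF_3^3)$ holds (the two-dimensional case of Theorem~\ref{thm:main} being immediate). Writing $V=\FF_3^3$ and noting $|A|\ge 5>\tfrac16|V|$, it suffices to produce a hyperplane $H$ with $0\notin H$, $[H]\cap A=\vn$, and $\aff(A\cap(-H))\subsetneq -H$; Proposition~\ref{prop:33} will then deliver subprimitivity, after which a line inside $A$ is located separately. I first record two consequences of sum-freeness in characteristic three: one has $0\notin A$ and $A\cap(-A)=\vn$ (take $x=y$ in $x+y=z$ and use $2x=-x$), and for every \emph{linear} hyperplane $[H]$ the intersection $A\cap[H]$ is a sum-free subset of a two-dimensional space, whence $|A\cap[H]|\le 3^{2-1}=3$.

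When $|A|\ge 6$ I would invoke Lev's covering theorem \cite{VL05}: such a set lies in an affine hyperplane $H$, which cannot pass through the origin, for otherwise $A$ would be a sum-free subset of a two-dimensional linear space and satisfy $|A|\le 3$. Then $[H]\cap A=\vn$ and $A\cap(-H)=\vn$, so the hypotheses of Proposition~\ref{prop:33} are met trivially and $A$ is subprimitive.

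The substantive case is $|A|=5$, where the task is to find a \emph{linear} hyperplane disjoint from $A$. Passing to the projective plane $\PP^2(\FF_3)$ --- whose $13$ points are the lines through the origin and whose $13$ lines are the linear hyperplanes --- the set $A$ determines $5$ \emph{distinct} points $P$ (distinctness uses $A\cap(-A)=\vn$), and the bound $|A\cap[H]|\le 3$ says that no four points of $P$ are collinear. The claim is that some line of $\PP^2(\FF_3)$ avoids $P$. Assuming not, every line meets $P$, so letting $a,b,c$ count the lines meeting $P$ in $3,2,1$ points one has $a+b+c=13$, the incidence count $3a+2b+c=4\cdot 5=20$ (each point lies on four lines), and the secant count $3a+b=\binom52=10$ (each pair lies on a unique line); these force $a=3$. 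The three trisecants would then have to realise $\sum_{p\in P}\binom{n_p}{2}$ of their intersection points inside $P$, where $n_p$ is the number of trisecants through $p$ and $\sum_{p}n_p=9$; but spreading $9$ over the five points of $P$ makes this sum at least $4$, whereas only $\binom32=3$ pairs of trisecants are available --- a contradiction. Hence a linear hyperplane $[H]$ with $[H]\cap A=\vn$ exists; choosing the orientation of the two affine cosets so that $|A\cap(-H)|\le 2$ makes $\aff(A\cap(-H))$ a proper subspace of the plane $-H$, and Proposition~\ref{prop:33} again yields subprimitivity.

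It remains to exhibit a line in the now-subprimitive set $A$. If $A$ lies in an affine hyperplane, then $A$ consists of at least five points of an affine plane of order three, which exceeds the maximal size $4$ of a set with no three collinear points there, so three points of $A$ are collinear and form a line. Otherwise $A$ is not contained in a hyperplane and I would apply Lemma~\ref{lem:kaff} with $k=1$: since $5>\tfrac16(27+3^{0})$, it produces a one-dimensional affine subspace $E$ with $|A\cap E|\ge\tfrac16(5\cdot 3+3)=3=|E|$, i.e.\ $E\subseteq A$. The one genuinely delicate point is the existence of the disjoint linear hyperplane when $|A|=5$; this is exactly where sum-freeness enters, through the ``at most three collinear directions'' bound that powers the projective double count.
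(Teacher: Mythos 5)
Your argument is correct in substance but reaches Proposition~\ref{prop:33} by a genuinely different count. The paper's proof works with the $13$ lines through the origin directly: the five points of $A$ occupy five of them, the $\binom52=10$ difference-directions must land among the remaining eight (sum-freeness forbids a difference of two elements of $A$ from being parallel to a third), and the resulting collision yields four coplanar points; the plane $H$ through these four points then feeds into Proposition~\ref{prop:33}, since $|A\cap H|=4>\frac13|H|$ forces $[H]\cap A=\vn$ and leaves at most one point in $-H$. You instead produce a \emph{linear} plane disjoint from $A$ by the dual count in the projective plane (the system $a+b+c=13$, $3a+2b+c=20$, $3a+b=10$ forcing $a=3$ trisecants, against which the convexity bound $\sum_p\binom{n_p}{2}\ge 4>\binom32$ gives the contradiction), and then orient the two nontrivial cosets so that the sparser one certifies the affine-span hypothesis. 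Both counts are correct; the paper's has the advantage of treating all $|A|\ge 5$ in one sweep, while yours hands Proposition~\ref{prop:33} a coset containing at least three points of $A$. Your two routes to the line (the size-four cap bound in the affine plane of order three, and Lemma~\ref{lem:kaff} with $k=1$) are both fine; the paper instead upgrades subprimitive to primitive via Lemma~\ref{lem:13} and invokes Lemma~\ref{lem:26}.

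The one step you should replace is the case $|A|\ge 6$, which you settle by citing the covering theorem of~\cite{VL05}. That theorem is established in~\cite{VL05} on the back of the three-dimensional classification --- i.e.\ on the very statement this corollary reproves --- and the entire point of the corollary here is to give an argument independent of Lev's case analysis; so the citation is at best against the spirit of the exercise and at risk of being circular. The repair is short and already implicit in your own work: if $A$ is coplanar your cap argument applies regardless of $|A|$, and if $A$ is not coplanar you may pick a non-coplanar five-point subset $A'\subseteq A$, run your projective count on $A'$ to get a primitive superset $P\supseteq A'$, note that $P$ is not a hyperplane and hence (by Lemma~\ref{lem:13} together with the fact, recorded in the paper's discussion of $\FF_3^3$, that non-hyperplane primitive subsets of $\FF_3^3$ have exactly five elements) satisfies $P=A'$ and is \emph{maximal} sum-free --- which forces $A=A'$ and rules out $|A|\ge 6$ in the non-coplanar case altogether.
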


\begin{proof}
	We shall show first that every five-element sum-free set 
	$A\subseteq \FF_3$ contains four coplanar points. 
	To this end we observe that thirteen lines of~$\FF_3^3$ pass 
	through the origin. Five of them intersect~$A$, while the 
	eight other ones do not. There are $\binom52>8$ pairs of points 
	from~$A$ and each pair determines the direction of a line. Thus 
	there are either two parallel lines intersecting $A$ at least 
	twice, or there is a line intersecting $A$ thrice. In both cases,  
	there are indeed four coplanar points in $A$.  

	Now let $A\subseteq \FF_3^3$ be an arbitrary sum-free set with 
	$|A|\ge 5$. If the entire set is coplanar, then it is clearly 
	subprimitive and a quick case analysis discloses that $A$ contains
	three collinear points. 
	Otherwise, there is a subset $A'\subseteq A$ of size five 
	which fails to be coplanar. By the first paragraph there is 
	a plane $H$ with $|A'\cap H|=4$, Proposition~\ref{prop:33}
	shows that $A'$ is primitive, and Lemma~\ref{lem:13} 
	yields $A'=A$. Finally, by Lemma~\ref{lem:26} $A$ contains a line.
\end{proof}

Here is a straightforward consequence, whose easy 
proof we omit. 

\begin{cor}\label{cor:stuss}
	If $A\subseteq \FF_3^3$ is a subprimitive set of size four, 
	then either $A$ is contained in a plane or one point in $A$ 
	is the sum of the three other ones. \qed
\end{cor}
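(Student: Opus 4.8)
The plan is to choose a primitive superset $A^*\supseteq A$ and to reduce everything to a description of the primitive subsets of $V=\FF_3^3$, exploiting that both alternatives in the conclusion are invariant under the automorphism group $\GL(V)$. It is worth recording the algebraic content of the second alternative: if $\sigma$ denotes the sum of the four elements of $A$, then a point $p\in A$ is the sum of the other three exactly when $\sigma-p=p$, i.e.\ when $-\sigma=p$; thus the second alternative amounts to the condition $-\sigma\in A$, which is plainly preserved by linear maps.

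First I would determine the possible shapes of $A^*$. If $A^*$ is a hyperplane, then it is a plane, $A\subseteq A^*$ is coplanar, and we are done. Otherwise $A^*$ is a non-hyperplane primitive set, and I claim $|A^*|=5$. Writing $\rho=\dim\Sym(A^*)$, Lemma~\ref{lem:13} gives $|A^*|=\tfrac16(27+3^{\rho+1})$. A primitive set with $\rho=2$ would consist of a single coset of its nine-element symmetry group and hence be a hyperplane, so $\rho\le 1$ and $|A^*|\in\{5,6\}$. If $\rho=1$ were possible, then Lemma~\ref{lem:26} would place a $2$-dimensional affine subspace inside $A^*$; but such a plane already has $9>6$ points, which is impossible. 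Hence $\rho=0$ and $|A^*|=5$.

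It remains to handle the case $|A^*|=5$, in which $A=A^*\sm\{p\}$ for a unique $p$. By the discussion following Definition~\ref{dfn:prim}, every five-element primitive subset of $\FF_3^3$ lies in the $\GL(V)$-orbit of the explicit set $P$, and since both alternatives are $\GL(V)$-invariant it suffices to verify the assertion for $A^*=P$. Recall that $P$ consists of the four points $(1,0,1),(1,1,-1),(1,1,0),(1,1,1)$ of the plane $\{x_1=1\}$ together with $(-1,0,0)$. A direct computation then checks, for each of the five points $p\in P$, that $P\sm\{p\}$ is either coplanar (this happens for two choices of $p$) or contains a point equal to the sum of the other three, equivalently satisfies $-\sigma\in P\sm\{p\}$; this exhausts all cases.

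The only real work lies in the middle step---showing that a non-hyperplane primitive subset of $\FF_3^3$ has exactly five elements, which is what eliminates the a priori conceivable six-element primitive sets and confines us to the single orbit of $P$---together with the short, routine verification on $P$ itself. I expect the size computation to be the main obstacle, since everything else is either immediate or a finite check.
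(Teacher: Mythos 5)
Your argument is correct; note that the paper itself offers no proof to compare against (the corollary is stated with its ``easy proof'' omitted), so the only question is whether your reasoning stands on its own. The reduction is sound: a primitive superset $A^{*}\supseteq A$ is either a plane (whence $A$ is coplanar) or, by Lemma~\ref{lem:13} combined with Lemma~\ref{lem:26} exactly as you say, has five elements with $\Sym(A^{*})=\{0\}$; and your reformulation of the second alternative as $-\sigma\in A$ (using $2=-1$ in $\FF_3$) is both correct and visibly $\GL(V)$-invariant. The final check on $P$ is also right: deleting $(1,0,1)$ or $(-1,0,0)$ leaves a coplanar quadruple, while deleting any of $(1,1,-1)$, $(1,1,0)$, $(1,1,1)$ leaves a set containing $-\sigma$. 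The one soft spot is your appeal to the statement that every five-element primitive subset of $\FF_3^3$ lies in the $\GL(V)$-orbit of $P$: the paper asserts this only informally in the discussion after Definition~\ref{dfn:prim} (``it could be shown by means of a simple case analysis'') and never proves it, so strictly speaking you are importing an unproved remark. This is easily repaired without the orbit claim: by the analysis following Definition~\ref{dfn:prim}, a non-hyperplane primitive $A^{*}\subseteq\FF_3^3$ has the form $W\cup\{-u\}$ with $W$ an $(H,\{u\})$-half of a plane $H\not\ni 0$; if the deleted point is $-u$ then $A=W\subseteq H$ is coplanar, and otherwise $A=(W\sm\{w_0\})\cup\{-u\}$, where either the three remaining points of $W$ are collinear (so $A$ is again coplanar) or a short direct computation in coordinates, normalising $H=\{x_1=1\}$ and $u=(1,0,0)$, verifies $-\sigma\in A$. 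Either way the substance of your proof --- the size computation via Lemmas~\ref{lem:13} and~\ref{lem:26} followed by a finite verification --- is exactly what is needed.
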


A simple averaging arguments transfers the last part of 
Corollary~\ref{cor:3} to higher dimensions. 

\begin{cor}\label{cor:k1}
	For every at least three-dimensional ternary vector space $V$, 
	every sum-free set $A\subseteq V$ with $|A|>\frac16|V|$ contains 
	a line. 
\end{cor}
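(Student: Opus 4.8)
The plan is to reduce to the three-dimensional case already settled in Corollary~\ref{cor:3} by a double-counting argument over three-dimensional linear subspaces. First I would record that $0\notin A$, since otherwise $0+0=0$ would violate sum-freeness; thus $A$ consists of nonzero vectors. The key observation is that if $W$ is a \emph{linear} subspace of $V$ of dimension three, then $W\cong\FF_3^3$ and $A\cap W$ is sum-free in $W$ (any relation $x+y=z$ inside $A\cap W$ would already contradict sum-freeness of $A$). Hence, were we to locate some such $W$ with $|A\cap W|\ge 5$, Corollary~\ref{cor:3} would supply a line inside $A\cap W\subseteq A$, finishing the proof.

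To find such a $W$, I would average $|A\cap W|$ over all three-dimensional linear subspaces $W$ of $V$, writing $n=\dim(V)$. Counting incidences $(v, W)$ with $v\in A$ and $v\in W$ in two ways gives
\[
	\sum_{\dim W=3}|A\cap W| = |A|\cdot N_1\,,
\]
where $N_1$ is the number of three-dimensional subspaces through a fixed nonzero vector, i.e.\ the number of two-dimensional subspaces of $V/\langle v\rangle\cong\FF_3^{n-1}$. Dividing by the total number $N_0$ of three-dimensional subspaces and using the standard identity $N_0=\frac{3^n-1}{3^3-1}\,N_1$ collapses the ratio to $N_1/N_0=\frac{26}{3^n-1}$, so the average of $|A\cap W|$ equals $\frac{26|A|}{3^n-1}$.

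It remains to check that this average exceeds $4$; since the quantities $|A\cap W|$ are integers, this forces at least one of them to be at least $5$. Using the hypothesis $|A|>\frac16|V|=\frac{3^n}{6}$, the inequality $\frac{26|A|}{3^n-1}>4$ reduces to $|A|>\frac{2(3^n-1)}{13}$, which is implied by $\frac{3^n}{6}\ge\frac{2(3^n-1)}{13}$, i.e.\ by $3^n\ge -12$ ---always true. For $n=3$ the averaging degenerates harmlessly: $V$ is the only three-dimensional subspace and the bound just returns $|A|>\frac{27}{6}$, whence $|A|\ge 5$ and Corollary~\ref{cor:3} applies directly.

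The argument is essentially routine once set up; the only point demanding care is the Gaussian-binomial bookkeeping that turns the incidence count into the clean average $\frac{26|A|}{3^n-1}$, together with the verification that this beats the threshold $4$ for every $n\ge 3$. I expect no genuine obstacle beyond this computation, the substance of the corollary having been front-loaded into Corollary~\ref{cor:3}.
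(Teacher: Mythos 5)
Your proposal is correct and follows essentially the same route as the paper: an averaging (incidence-count) argument over three-dimensional linear subspaces yields some $L$ with $|A\cap L|\ge\frac{26|A|}{|V|-1}>4$, hence $|A\cap L|\ge 5$, and Corollary~\ref{cor:3} then supplies the line. The only difference is that you spell out the Gaussian-binomial bookkeeping that the paper compresses into the single phrase ``by averaging.''
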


\begin{proof}
	By averaging there exists a three-dimensional subspace $L\le V$
	such that 
		\[
		|A\cap L|
		\ge
		\frac{26|A|}{|V|-1}
		>
		\frac{26}6
		>
		4\,,
	\]
		i.e., $|A\cap L|\ge 5$. So by Corollary~\ref{cor:3} $A\cap L$ 
	contains a line.
\end{proof}

On first reading the next result might look like a variant of 
Proposition~\ref{prop:33}. However, as it is quite unrelated to 
the recursive structure of primitive sets, it lies much closer 
to the surface. 

\begin{prop}\label{prop:37}
	Let~$A$ be a sum-free subset of a finite ternary vector space~$V$
	of size $|A|>\frac16|V|$.
	If there exists a hyperplane $H$ not passing through the origin 
	such that $A\cap H=\vn$ and $\aff(A\cap [H])\ne [H]$, then $A$ 
	is subprimitive. 
\end{prop}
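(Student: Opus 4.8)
The plan is to \emph{pivot} away from $H$ to a new hyperplane transversal to it; this is forced on us, since every primitive set misses its own central hyperplane, so the primitive superset we seek must have a central hyperplane distinct from $[H]$ and disjoint from $A$. First dispose of the trivial case: if $A\cap[H]=\vn$ then $A\subseteq -H$ sits inside a hyperplane missing the origin and is subprimitive by clause~\ref{it:pra}; so assume $A_0:=A\cap[H]\neq\vn$ and write $A_-=A\cap(-H)$, so that $A=A_0\dcup A_-$ because $A\cap H=\vn$. Projecting to $V/[H]\cong\FF_3$ one reads off the sum-free constraints: $A_0$ is sum-free in $[H]$; the pattern $2+2=1$ is vacuous, since $A$ meets no coset mapping to $1$; and the pattern $0+2=2$ yields $A_0\cap(A_--A_-)=\vn$. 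Combining the latter with Lemma~\ref{lem:22}, applied to a translate of $A_-$ sitting inside $[H]$, forces $|A_-|\le\tfrac13|[H]|$, as otherwise $A_--A_-=[H]$ would swallow the nonempty set $A_0$.

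The goal is to produce a linear functional $\pi^\ast$ on $V$ that is nowhere zero on $A$ and not proportional to $\pi$; then $[H^\ast]:=\ker\pi^\ast$ is a hyperplane through the origin, distinct from $[H]$, with $A\cap[H^\ast]=\vn$, and I would finish by invoking Proposition~\ref{prop:33}. To meet the affine-span hypothesis of that proposition I would orient $\pi^\ast$ so that $A_0\subseteq -H^\ast=\{\pi^\ast=2\}$; then $A\cap(-H^\ast)\supseteq A_0$ has affine span contained in $T:=\aff(A_0)$, and since $T\subsetneq[H]$ has dimension at most $\dim V-2$, this span is a proper affine subspace of $-H^\ast$, as required. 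Writing $\pi^\ast=\pi+\delta$ and decomposing $\delta$ into its restriction $\lambda=\delta|_{[H]}\in[H]^\ast$ and its value at a base point $p\in A_-$, the two demands become: $\lambda$ is nowhere zero on $A_0$ (so that $\pi^\ast=\lambda\neq0$ on $A_0$), and $\lambda$ is not surjective on the translate $A_--p\subseteq[H]$ (so that the free constant $\delta(p)$ can be chosen to keep $\pi^\ast\neq0$ throughout $A_-$).

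The existence of such a $\lambda$ is where the hypotheses are spent, and it is the main obstacle. The properness $\aff(A_0)=T\subsetneq[H]$ is exactly what guarantees a supply of hyperplanes of $[H]$ disjoint from $A_0$, i.e.\ of functionals $\lambda$ meeting the first demand; the task is to select one on which $A_-$ also omits a coset. When $A_-$ fails to span $-H$ affinely this is immediate, for a translate of $A_-$ then lies in a proper subspace of $[H]$ and one simply takes $\lambda$ to vanish on it. The delicate case is $\aff(A_-)=-H$, where no hyperplane of $[H]$ contains a translate of $A_-$; here I would lean on the bound $|A_-|\le\tfrac13|[H]|$ together with $A_0\cap(A_--A_-)=\vn$, feeding them into a Kneser-type count as in Lemma~\ref{lem:Kn} to push $A_-$ off one coset of a suitable $\ker\lambda$ avoiding $A_0$. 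A final subtlety, affecting the orientation step, is the possibility that $0\in\aff(A_0)$: a short argument shows that $0\in\aff(A_0)$ forces $\dim\aff(A_0)\ge3$ (three sum-free vectors can never produce a vanishing affine relation), so it cannot occur unless $\dim V\ge5$, and in that regime a more careful choice of $\pi^\ast$, rather than the clean rotation $\pi^\ast|_{A_0}\equiv2$, is needed to still secure the affine-span hypothesis of Proposition~\ref{prop:33}.
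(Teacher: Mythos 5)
Your opening paragraph is sound and matches the paper's first moves: the reduction to $A_0=A\cap[H]\neq\vn$, the observation $A_0\cap(A_--A_-)=\vn$, and the bound $|A_-|\le\frac13|[H]|$ via Lemma~\ref{lem:22} all appear (in equivalent form) at the start of the paper's argument. From there, however, your plan diverges and contains two genuine gaps. The decisive one is the ``orientation'' step. Knowing that $\pi^\ast$ is nowhere zero on $A$ and that $A_0\subseteq\{\pi^\ast=2\}$ does \emph{not} give $\aff(A\cap(-H^\ast))\subseteq\aff(A_0)$: the set $A\cap(-H^\ast)$ also contains $A_-\cap\{\pi^\ast=2\}$, which lies in $-H$ and hence outside $\aff(A_0)\subseteq[H]$, so the inclusion you assert is available only if $\pi^\ast\equiv 1$ on all of $A_-$ --- a far stronger demand than the two you actually impose on $\lambda$ (non-vanishing on $A_0$, non-surjectivity on $A_--p$). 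Worse, the required $\pi^\ast$ simply need not exist. Take the primitive set $A=\{(0,1,1),(0,1,2),(1,0,0),(1,1,0),(1,2,0)\}\subseteq\FF_3^3$ with $H=\{x\colon x_1=2\}$, so that $A_0=\{(0,1,1),(0,1,2)\}$ and $A_-=\{(1,0,0),(1,1,0),(1,2,0)\}$; this satisfies all hypotheses of the proposition. A direct check shows that the only linear functionals nowhere zero on $A$ are $(f_1,0,f_3)$ with $f_1,f_3\neq 0$, and every one of them takes the two distinct values $f_3$ and $-f_3$ on $A_0$. So no admissible $\pi^\ast$ is even constant on $A_0$, and your rotation is impossible precisely on the extremal configuration the proposition is designed to detect. (A pivot hyperplane satisfying Proposition~\ref{prop:33} does exist here, e.g.\ $\pi^\ast=x_1+x_3$, but it splits $A_0$ between the two nonzero level sets, with the sparse side a single point --- information you can only extract after the structural analysis, not before.)

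The second gap is that the existence of $\lambda$ in the ``delicate case'' $\aff(A_-)=-H$ --- which you correctly identify as where the hypotheses must be spent --- is never established; ``feeding them into a Kneser-type count as in Lemma~\ref{lem:Kn} to push $A_-$ off one coset'' is the entire content of the proposition, not a detail. The paper's route avoids both problems by never pivoting to a new hyperplane: it fixes a hyperplane $U$ of $[H]$ containing $\aff(A_0)$, shows $|A\cap U|>\frac12|U|$, applies Lemma~\ref{lem:Kn} to the three cosets $X,U,Y$ with $(A\cap X)+(A\cap U)$ disjoint from $A\cap Y$, and pins down a subgroup $K\le[U]$ of index $3$ such that the image of $A$ modulo $K$ is exactly the five-point primitive set above; the primitive superset is then exhibited directly as $\psi^{-1}[P]$, with no appeal to Proposition~\ref{prop:33} and hence no need for the induction hypothesis $\Phi(V)$ (which your route would additionally require, though the proposition as stated does not assume it). If you want to salvage your plan, you would in effect have to reprove this structural dichotomy first, at which point the detour through Proposition~\ref{prop:33} buys nothing.
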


\begin{proof}
	Let $U$ be a hyperplane in $[H]$ covering 
	$\aff(A\cap [H])$.
	If $|A\cap (-H)|>\frac13|H|$, then Lemma~\ref{lem:22}
	yields $A\cap [H]=\vn$, whence $A$ is covered by the 
	hyperplane $-H$. 
	Suppose now that $|A\cap (-H)|\le \frac13|H|$, wherefore 
		\begin{equation}\label{eq:3424}
		|A\cap U|
		> 
		\tfrac16|H|
		=
		\tfrac12|U|\,.
	\end{equation}
	In particular, $U$ does not contain the origin and there exists 
	a partition of $-H$ into three translates $X$, $Y$, $Z$ of $U$ such 
	that $Y=X+U$ and $Z=Y+U$. For reasons of cyclic symmetry 
	we can suppose 
		\begin{equation}\label{eq:3420}
		|A\cap Z|=\min\{|A\cap X|, |A\cap Y|, |A\cap Z|\}\,.
	\end{equation}
		If $|A\cap X|+|A\cap U|>|U|$, then $A$ is disjoint to $X\pm U$,
	whence $A$ is covered by the hyperplane containing $X$ and $U$. 
	So we may assume from now on that 
		\begin{equation}\label{eq:3442}
		|A\cap Y|
		\overset{\eqref{eq:3420}}{\ge}
		\tfrac12(|A\cap Y|+|A\cap Z|)
		\ge
		\tfrac12(|A|-|U|)
		>
		\tfrac14|U|
	\end{equation}
		and, similarly, that $|A\cap X|>\frac14|U|$. 
	In particular, neither $A\cap X$ nor $A\cap Y$ is empty. 
	Together with 
		\begin{align*}
		2|A\cap X|+2|A\cap U|+|A\cap Y|
		&=
		|A|+|A\cap U|+(|A\cap X|-|A\cap Z|) \\
		&\overset{\eqref{eq:3420}}{\ge}
		|A|+|A\cap U| 
		\overset{\eqref{eq:3424}}{>}
		\tfrac32|U|+\tfrac12|U|
		=
		2|U|
	\end{align*}
	and Lemma~\ref{lem:Kn} this tells us that there is a subspace $K$
	of $[U]$ such that
		\[
		|(A\cap X)+K|+|(A\cap U)+K|=|U|
	\]
		and $A\cap Y$ is covered by a single translate of $K$. 
	We cannot have $K=[U]$, for then one of $A\cap X$ or $A\cap U$
	had to be empty. So~\eqref{eq:3442} reveals $\dim(U/K)=1$
	and we see that $A\cap X$ is contained in a single translate 
	of $K$, while $A\cap U$ is covered by two translates of $K$. 
	
	Let $\psi\colon V\lra \FF_3^3$ be the epimorphism with 
	kernel~$K$ satisfying 
		\[
		\psi[A\cap U]=\{(0, 1, 1), (0, 1, 2)\} 
		\quad \text{ and } \quad 
		\psi[A\cap X]=\{(1, 0, 0)\}\,.
	\]
		Since $|A\cap Z|\le |A\cap Y|\le |K|$, we have 
		\[
		|A\cap X|+|A\cap U|>|A|-2|K|>2|K|
	\]
		and, hence, 
		\[
		|A\cap \psi^{-1}(1, 0, 0)|+|A\cap \psi^{-1}(0, 1, 1)|>|K|\,.
	\]
		By Corollary~\ref{cor:B} this yields 
	$(1, 0, 0)\pm (0, 1, 1)\not\in \psi[A]$
	and, similarly, $(1, 0, 0)\pm (0, 1, 2)\not\in \psi[A]$
	holds as well. This leaves us with $\psi[A\cap Y]=\{(1, 1, 0)\}$
	and $\psi[A\cap Z]=\{(1, 2, 0)\}$, so that altogether 
		\[
		\psi[A]
		=
		\{(0, 1, 1), (0, 1, 2), (1, 0, 0), (1, 1, 0), (1, 2, 0)\}\,.
	\]
		As this subset of $\FF_3^3$ is primitive, $A$ is indeed 
	subprimitive. 
\end{proof}

Both propositions of this section combine as follows. 

\begin{conclusion}\label{conc:310}
	Assume $\Phi(\FF_3^n)$, where $n\ge 2$, and 
	set $V_{ij}=\{(i, j)\}\times \FF_3^{n-2}$ for every 
	point $(i, j)\in\FF_3^2$.  
	Let $A\subseteq \FF_3^n$ be a sum-free set with
	${|A|>\tfrac12\cdot 3^{n-1}}$ 
	and $|A\cap V_{01}|> \tfrac12\cdot 3^{n-2}$.  
	If for every $i\in \FF_3$ at least one of the sets $A\cap V_{1i}$,
	$A\cap V_{2, 1-i}$ is empty, then $A$ is subprimitive. 
\end{conclusion}

\begin{proof}
	Set $A_{ij}=A\cap V_{ij}$ for every $(i, j)\in\FF_3^2$.
	Corollary~\ref{cor:B} yields  $A_{01}-A_{01}=V_{00}$
	and $A_{01}+A_{01}=V_{02}$, whence $A_{00}=A_{02}=\vn$.
	By symmetry we can assume further that 
		\begin{enumerate}[label=\nlabel]	
		\item\label{it:51} either $A_{10}=A_{11}=A_{12}=\vn$
		\item\label{it:52} or $A_{10}=A_{12}=A_{20}=\vn$.
	\end{enumerate}
		
	If~\ref{it:51} holds, then the hyperplane 
	$H=V_{10}\dcup V_{11}\dcup V_{12}$ fulfils the hypothesis 
	of Proposition~\ref{prop:37} and $A$ is indeed subprimitive.
	Similarly, in case~\ref{it:52} we can apply 
	Proposition~\ref{prop:33} to the hyperplane 
	$H=V_{01}\dcup V_{11}\dcup V_{21}$.
\end{proof}

\section{Quadruple sums}

Summarising our current state of knowledge, we have shown 
the `easy' direction of Theorem~\ref{thm:main} for all ternary 
spaces $V$, and we know that the `difficult' direction holds 
in the low-dimensional cases $\dim(V)\le 3$ 
(cf. Corollary~\ref{cor:3}), which gives
the induction hypothesis~$\Phi(\FF_3^4)$. Before we can complete 
the general induction step, however, we need to study the 
case $n=4$ separately (cf.\ Lemma~\ref{lem:42}). Roughly speaking, 
this is because that case is required for establishing that dense 
sum-free sets $A$ satisfy $0\not\in 4A$. 

We would like to point out that, as reported in~\cite{VL05}, 
Lev verified the case $n=4$ of Corollary~\ref{cor:seva} 
electronically. It seems reasonable to believe that the programme 
he wrote could confirm Lemma~\ref{lem:42} as well. Nevertheless, 
we did not pursue this direction here. Rather, we argue that 
the results from the previous section allow us to split this 
problem into a manageable number of cases. We commence with 
sets containing two parallel lines.    

\begin{lem}\label{lem:41}
	If a sum-free set $A\subseteq \FF_3^4$ with $|A|\ge 14$ 
	contains two parallel lines, then it is subprimitive.
\end{lem}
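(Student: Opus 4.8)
The plan is to reduce to Conclusion~\ref{conc:310}, whose hypothesis $\Phi(\FF_3^4)$ is already available from the three-dimensional case. Let $d$ be the common direction of the two parallel lines $\ell_1, \ell_2\subseteq A$, and let $P=\aff(\ell_1\cup\ell_2)$ be the plane they span. First I would observe that $0\notin P$: otherwise $P$ would be a two-dimensional \emph{subspace}, the lines $\ell_1,\ell_2$ (which avoid the origin because $A$ is sum-free) would be the two cosets of $\langle d\rangle$ in $P$ other than $\langle d\rangle$ itself, and then the sum of any two elements of $\ell_1$ would land in $\ell_2\subseteq A$, contradicting sum-freeness. Consequently the projection $\sigma\colon \FF_3^4\lra \FF_3^4/[P]\cong\FF_3^2$ sends $P$ to a non-zero point, so after an automorphism we may arrange $P=V_{01}$ in the notation of Conclusion~\ref{conc:310}. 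Writing $a_{ij}=|A\cap V_{ij}|$ we then have $a_{01}\ge 6>\tfrac12\cdot 3^{2}$ and $|A|\ge 14>\tfrac12\cdot 3^{3}$, so it only remains to verify the emptiness hypothesis of that conclusion.

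The engine driving this verification is that $A\cap V_{01}$ contains the two full parallel lines $\ell_1,\ell_2$. For every $b\in A$ the translates $\ell_1+b$ and $\ell_2+b$ are two parallel lines lying in the single fibre $V_{(0,1)+\sigma(b)}$ and, since $A$ is sum-free, disjoint from $A$; hence $A$ meets that fibre in at most one line, so $a_{(0,1)+\sigma(b)}\le 3$. Equivalently, any \emph{heavy} fibre (one with at least four points of $A$) has its label outside $(0,1)+\sigma(A)$, and therefore its predecessor under the shift $x\mapsto x+(0,1)$ carries no points of $A$. Applying this to the heavy fibre $V_{01}$ itself yields $a_{00}=0$; more generally, organising $\FF_3^2$ into the three cosets of $\langle(0,1)\rangle$, each such coset contains at most one heavy fibre, and that fibre forces a zero on its predecessor.

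With this in hand I would verify the emptiness condition by contradiction. If for some $i$ both $A\cap V_{1i}$ and $A\cap V_{2,1-i}$ were non-empty, then for $x$ in the former and $y$ in the latter one has $x+y\in V_{01}\setminus A$, which, as $A\cap V_{01}\supseteq\ell_1\cup\ell_2$, is contained in the third $\langle d\rangle$-coset of $P$. This forces the $[P]$-coordinates of $A\cap V_{1i}$ and of $A\cap V_{2,1-i}$ into single cosets of $\langle d\rangle$ whose sum lands in that deficient line, so both fibres are light. Feeding these single-coset confinements, together with the heavy/zero dichotomy of the previous paragraph and the total mass $|A|\ge 14$, into a short finite check over the nine labels yields the required contradiction, after which Conclusion~\ref{conc:310} shows that $A$ is subprimitive. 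In the few residual configurations where the emptiness pattern genuinely fails I would instead exhibit a hyperplane disjoint from $A$ (respectively from $A-A$) whose relevant affine span is proper, and finish via Proposition~\ref{prop:37} (respectively Proposition~\ref{prop:33}).

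The main obstacle will be the bookkeeping of the third paragraph: converting the fibre-size caps and the single-coset confinements into the exact emptiness pattern demanded by Conclusion~\ref{conc:310}, and separating the degenerate case $a_{01}=6$, where $A\cap V_{01}$ is exactly the two given lines, from the cases $a_{01}\in\{7,8,9\}$, which constrain the deficient line further. Everything else is a direct consequence of sum-freeness and the observation that two parallel lines project to full fibres.
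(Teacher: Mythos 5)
Your reduction to Conclusion~\ref{conc:310}, and the two structural facts you extract from the pair of full lines (a fibre meeting $A$ in at least four points forces its predecessor under the shift by $(0,1)$ to be empty of $A$; a ``bad pair'' $A\cap V_{1i}\ne\vn\ne A\cap V_{2,1-i}$ forces both fibres into single cosets of $\langle d\rangle$ whose sum lies in the deficient line $\ell_0$), are correct. The gap is the step that is supposed to finish the argument: ``a short finite check over the nine labels yields the required contradiction.'' The nine fibre sizes alone permit $|A|$ far in excess of $14$ even in the presence of a bad pair (each column is merely bounded by $9$), so the check must at least descend to the $27$ cosets of $\langle d\rangle$ and apply Cauchy--Davenport in $\FF_3$ to every sum and difference relation. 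Even that does not close: take $|A\cap V_{01}|=8$ and let each of the six fibres $V_{ij}$ with $i\ne 0$ meet $A$ in exactly one point, placed on the unique $\langle d\rangle$-coset not excluded by $\ell_1,\ell_2$. This profile has $|A|=14$, contains a bad pair, and satisfies every inequality on fibre sizes and on coset multiplicities that your constraints generate; it is only ruled out at the level of individual elements (the pair condition pins down the point of $A\cap V_{21}$, which then coincides with the sum of the points of $A\cap V_{10}$ and $A\cap V_{11}$). So the verification is genuinely an element-level case analysis, and your closing clause about ``residual configurations'' to be handled by Propositions~\ref{prop:33} and~\ref{prop:37} is an unsubstantiated placeholder for exactly the cases where the difficulty lives.

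The paper sidesteps all of this with two devices absent from your plan. First, it chooses the plane $Q$ to maximise $|A\cap Q|$ among planes containing two parallel lines of $A$, and proves the cross-fibre Claim~\ref{clm:42} (two points of $A$ with the same nonzero first coordinate share their third coordinate) not by counting but by showing that any violation makes Proposition~\ref{prop:33} applicable. Second, it bootstraps to a second pair of parallel lines $B_4\subseteq A\cap Q$ in an independent direction, confines $A\sm B_3$ and $A\sm B_4$ to two distinct three-dimensional subspaces $W_3$, $W_4$, and concludes $|A|\le |B_3\cup B_4|+|A\cap W_3\cap W_4|\le 8+3=11$. To keep your route you would have to either carry out the element-level analysis in full or import something playing the role of Claim~\ref{clm:42}; as written, the proof is incomplete at its central step.
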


\begin{proof}
	Arguing indirectly we assume that $A$ is not subprimitive. 
	For every $k\in\{1, 2, 3, 4\}$ let $\pi_k\colon\FF_3^4\lra\FF_3$
	be the projection onto the $k^{\mathrm{th}}$ coordinate.  
	Set $V_{ij}=\{(i, j)\}\times \FF_3^2$ for 
	every point $(i, j)\in \FF_3^2$. Pick a two-dimensional affine 
	space $Q\subseteq \FF_3^4$ such that $A\cap Q$ contains two 
	parallel lines and, subject to this, $|A\cap Q|$ is maximal. 
	Due to $|A\cap Q|\ge 6>3$ the origin cannot be in $Q$. 
	So an appropriate choice of coordinates allows us to 
	assume $Q=V_{01}$ and that 
	$B_3=\{(0, 1, y, z)\in \FF_3^4\colon y\ne 0\}$ is a subset of $A$.
	
	\begin{clm}\label{clm:42}
		If two points $p, q\in A$ satisfy $\pi_1(p)=\pi_1(q)\ne 0$,
		then $\pi_3(p)=\pi_3(q)$.
	\end{clm}
	
	\begin{proof}
		Assume for the sake of contradiction that this fails, i.e.,
		that there are $p, q\in A$ such that $\pi_1(p)=\pi_1(q)\ne 0$,
		but $\pi_3(p)\ne \pi_3(q)$. 
		Owing to $\pm(p-q)\not\in B_3$ we have $\pi_2(p)=\pi_2(q)$.
		Without loss generality we can assume 
		that $p, q\in V_{11}$. We intend to show that 
		$H=\pi_2^{-1}(1)$ satisfies the assumptions 
		of Proposition~\ref{prop:33}. Due to $B_3-B_3=V_{00}$, 
		$\{p, q\}-B_3=V_{10}$, and $B_3-\{p, q\}=V_{20}$ we have 
		indeed $A\cap [H]=\vn$. Moreover $B_3+B_3=V_{02}$ 
		and $B_3+\{p, q\}=V_{12}$ show that $A\cap (-H)$ is contained 
		in the affine space $V_{22}$. 
	\end{proof}
	
	By Conclusion~\ref{conc:310} there exists some $i\in \FF_3$
	for which neither $A\cap V_{1i}$ nor $A\cap V_{2, 1-i}$ is empty.
	Pick $x\in A\cap V_{1i}$, $y\in A\cap V_{2, 1-i}$
	and set $r_3=\pi_3(x)$, $s_3=\pi_3(y)$.
	As $x+y\not\in B_3$, 
	we have $r_3+s_3=0$. By Claim~\ref{clm:42} all points $x'\in A$ 
	with $\pi_1(x')=1$ satisfy $\pi_3(x')=r_3$ and, similarly, all 
	$y'\in A$ with $\pi_1(y')=2$ have the property 
	$\pi_3(y')=s_3=2r_4$. Consequently, $A\sm B_3$ is contained in 
	the three-dimensional vector 
	space $W_3=\{z\in\FF_3^4\colon \pi_3(z)=r_3\pi_1(z)\}$. 
	
	In view of $|A\sm B_3|=8>5$ it follows  
	that some two-dimensional affine space covers $A\sm B_3$. 
	By our extremal choice 
	of $Q=V_{01}$ this yields $|A\cap V_{01}|\ge 8$ and we can 
	assume that the points $(0, 1, 0, \pm 1)$
	are in $A$, which causes 
	$B_4=\{(0, 1, y, z)\in \FF_3^4\colon z\ne 0\}$
	to be a subset of $A$. By exchanging the r\^{o}les of the 
	third and fourth coordinate in the arguments presented to far we
	see that $A\sm B_4$ lives in a three-dimensional space of  
	the form $W_4=\{z\in\FF_3^4\colon \pi_4(z)=r_4\pi_1(z)\}$. 
	But now $\dim(W_3\cap W_4)=2$ yields $|A\cap W_3\cap W_4|\le 3$
	and we are led to the contradiction 
	$|A|=|B_3\cup B_4|+|A\cap W_3\cap W_4|\le 11$. 
\end{proof}

\begin{lem}\label{lem:42}
	Every sum-free set $A\subseteq \FF_3^4$ with $|A|\ge 14$ 
	is subprimitive.
\end{lem}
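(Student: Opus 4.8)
The plan is to reduce to Lemma~\ref{lem:41} and, in the complementary situation, to force the configuration of Conclusion~\ref{conc:310}. Since $|A|\ge 14>\tfrac16\cdot 81$, Corollary~\ref{cor:k1} furnishes a line $\ell\subseteq A$; write $e$ for its direction. If $A$ contained two parallel lines we would be done by Lemma~\ref{lem:41}, so I assume throughout that $A$ has no two parallel lines. The first thing I would record is the rigidity $A\cap C(\ell)=\ell$. Indeed, as cosets one computes $\ell+\ell=-\ell$, so $-\ell\subseteq A+A$ forces $(-\ell)\cap A=\vn$ by sum-freeness; and were $e$ or $-e$ in $A$, adding it to a point of $\ell$ would produce a further point of $\ell\subseteq A$, again contradicting sum-freeness. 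Thus $A$ meets the cone $C(\ell)$ only in $\ell$ itself, avoiding both $-\ell$ and the axis $\langle e\rangle$.

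Next I would pass to the quotient $\pi\colon\FF_3^4\to\FF_3^4/\langle e\rangle\cong\FF_3^3$. By the rigidity above, the fibre over $p_0=\pi(\ell)$ is the full line $\ell$, the fibres over $-p_0$ and over $0$ are empty, and---because $A$ has no two lines parallel to $e$---every remaining fibre meets $A$ in at most two points. Hence $\pi(A)$ has at least seven points and, since any relation $\pi(x)+\pi(y)=p_0$ would lift to $x+y\in\ell\subseteq A$, we get $p_0\notin\pi(A)+\pi(A)$. My aim is to turn this picture into a choice of coordinates presenting $\FF_3^4$ as $\FF_3^2\times\FF_3^2$ so that, with $V_{ij}=\{(i,j)\}\times\FF_3^2$ as in Conclusion~\ref{conc:310}, one coset $V_{01}$ is dense (at least five points) while the emptiness pattern required there holds.

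To locate the dense plane I would count incidences of $A$ with the thirteen planes through $\ell$ and, crucially, use that each plane carries at most six points of $A$ (a subset of $\FF_3^2$ with no two parallel lines has at most six elements). Once a plane of five or six points is taken as $V_{01}$, the relations $A_{01}-A_{01}=V_{00}$ and $A_{01}+A_{01}=V_{02}$ from Corollary~\ref{cor:B} immediately annihilate $V_{00}$ and $V_{02}$; the rigidity $A\cap C(\ell)=\ell$ together with the at-most-two-points-per-fibre bound should then propagate enough vanishing to secure, for every $i$, emptiness of one of $A\cap V_{1i}$, $A\cap V_{2,1-i}$. Feeding this into Conclusion~\ref{conc:310}---equivalently, in the two extreme configurations, directly into Proposition~\ref{prop:33} or Proposition~\ref{prop:37}---then shows that $A$ is subprimitive.

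The hard part will be the middle step: simultaneously guaranteeing a plane with five or six points of $A$ \emph{and} the full emptiness pattern, since a naive average over the planes through $\ell$ only forces four points in the best one. I expect to need a genuine case analysis on how the at least eleven points of $A\setminus\ell$ distribute over the fibres of $\pi$, tracking the two-points-per-fibre data, and in the stubborn arrangements to extract after all a second line parallel to one already present and finish via Lemma~\ref{lem:41}. The no-two-parallel-lines hypothesis is exactly what caps every plane at six points and rules out the spread-out configurations that would otherwise escape both Conclusion~\ref{conc:310} and its underlying propositions, so it is the lever that makes the case analysis terminate.
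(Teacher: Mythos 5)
Your preparatory observations are all correct: the reduction of the two-parallel-lines case to Lemma~\ref{lem:41}, the rigidity $A\cap C(\ell)=\ell$, the projection along the direction of $\ell$ with a full fibre over $p_0$, empty fibres over $0$ and $-p_0$, and at most two points in every other fibre, and the bound of six points of $A$ per plane through $\ell$. But the proof stops exactly where the real work begins. The entire content of the lemma is the step you defer (``I expect to need a genuine case analysis\dots''): you never actually produce a plane carrying five or six points of $A$, and you never verify the emptiness pattern required by Conclusion~\ref{conc:310}. As you note yourself, averaging over the thirteen planes through $\ell$ only yields four points in the best one, so your proposed pivot is not even guaranteed to exist; and even granting a six-point plane, nothing in what you wrote forces, for every $i$, one of $A\cap V_{1i}$, $A\cap V_{2,1-i}$ to be empty. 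What remains unproved is therefore not a routine verification but the heart of the lemma.

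For comparison, the paper avoids the case analysis you anticipate by working downstairs in $\FF_3^3$ with the two sets $B=\{q\colon |\psi^{-1}(q)\cap A|\ge 2\}$ and $C=\{q\colon \psi^{-1}(q)\cap A\ne\vn\}$, recording that $B$ is sum-free and that $(B\pm B)\cap C=\vn$. Since $|B|+|C|\ge 13$ and $|C|\le 9$, either $|B|\ge 5$ and the already established three-dimensional case of Theorem~\ref{thm:main} gives four coplanar points of $B$, or $|B|=4$, $|C|=9$ and Corollary~\ref{cor:stuss} plus a short explicit configuration check does the same. With a plane $H$ containing four points of $B$, Lemma~\ref{lem:22} gives $B-B=[H]$ and Kneser's theorem gives $|(B+B)\cap(-H)|\ge 7$, so $C$ avoids $[H]$ and $C\cap(-H)$ lies on a line; Proposition~\ref{prop:33} applied to $\psi^{-1}[H]$ then finishes. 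The key ideas your proposal is missing are precisely this passage to the heavy-fibre set $B$ (rather than hunting for a dense plane of $A$ itself) and the use of Kneser's theorem to confine $C\cap(-H)$ to a line, which is what makes Proposition~\ref{prop:33} applicable without any enumeration of fibre distributions.
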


\begin{proof}
	Let $\psi\colon \FF_3^4\lra\FF_3^3$ be the projection deleting 
	the last coordinate. Since $A$ contains a line 
	(by Corollary~\ref{cor:k1}), we may assume that there exists a	
	point $p\in \FF_3^3$ such that $\psi^{-1}(p)\subseteq A$.
	Set $B=\{q\in \FF_3^3\colon |\psi^{-1}(q)\cap A|\ge 2\}$ 
	and $C=\{q\in \FF_3^3\colon \psi^{-1}(q)\cap A\ne\vn\}$.
	As~$A$ is sum-free, we have 
		\begin{equation}\label{eq:4139}
		(B+B)\cap C=(B-B)\cap C=\vn\,.
	\end{equation}
		In particular, $B$ is sum-free.
	If there exists a point $q\ne p$ in $\FF_3^3$ 
	with $\psi^{-1}(q)\subseteq A$, then $A$ contains two parallel 
	lines and Lemma~\ref{lem:41} tells us that $A$ is indeed 
	subprimitive. So we may assume from now on that $p$ is the only 
	such point. In view of $|A|\ge 14$ this implies 
		\begin{equation}\label{eq:4146}
		|B|+|C|\ge 13\,.
	\end{equation}
		Moreover, $C$, $C+p$, and $C-p$ are disjoint subsets of $\FF_3^3$,
	whence 
		\begin{equation}\label{eq:4151}
		|C|\le 9\,.
	\end{equation}
		
	\begin{clm}
		There are four coplanar points in $B$.
	\end{clm}
	
	\begin{proof}
		If $|B|\ge 5$ this follows from the already established 
		case $V=\FF_3^3$ of Theorem~\ref{thm:main}. 
		Due to~\eqref{eq:4146} and~\eqref{eq:4151} it remains to 
		discuss the case that $|B|=4$ and $|C|=9$. Take an arbitrary 
		point $c\in C\sm B$. Using~\eqref{eq:4139} it is easy to 
		show that $B\cup\{c\}$ is sum-free. Hence, $B$ is subprimitive
		and by Corollary~\ref{cor:stuss} either we are done or one 
		point of $B$ is the sum of the three other ones. For reasons 
		of symmetry we may assume that
				\[
			B=\{(1, 0, 0), (0, 1, 0), (0, 0, 1), (1, 1, 1)\}\,.
		\]
				But now~\eqref{eq:4139} entails 
				\[
			C\subseteq B\cup\{(1, 2, 2), (2, 1, 2), (2, 2, 1)\}\,,
		\]
				which contradicts $|C|=9$.
	\end{proof}
	
	Let $H\subseteq \FF_3^3$ be a plane intersecting $B$ in at 
	least four points. This plane cannot contain the origin, 
	Lemma~\ref{lem:22} yields $B-B=[H]$, and Kneser's theorem 
	implies $|(B+B)\cap (-H)|\ge 7$. Hence,~\eqref{eq:4139} tells 
	us $C\cap [H]=\vn$ and that $C\cap (-H)$ can be covered by 
	a line. So by Proposition~\ref{prop:33} applied to $\psi^{-1}[H]$	
	the set $A$ is indeed subprimitive.
\end{proof}

Now we keep a promise given in Section~\ref{sec:pp}.

\begin{prop}\label{prop:42}
	If a sum-free subset $A$ of a ternary space~$V$ satisfies 
	$|A|>\frac16|V|$, then~${0\not\in 4A}$.
\end{prop}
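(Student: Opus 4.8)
The plan is to reduce the general statement, via induction on $\dim(V)$, to the combination of Lemma~\ref{lem:4A} (which handles genuinely primitive sets) and the classification machinery already in place. The key tension is that $A$ is assumed only sum-free and dense, not primitive; but the whole thrust of the paper is that dense sum-free sets are subprimitive, and a subprimitive set sits inside a primitive one, where $0\notin 4A$ is known. So the heart of the matter is to \emph{promote} $A$ to an actual primitive set and then transfer the conclusion.

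\medskip

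First I would dispose of the low-dimensional base cases. For $\dim(V)\le 3$ the statement is either vacuous or covered by Corollary~\ref{cor:3}, and the delicate case $\dim(V)=4$ is exactly what Lemma~\ref{lem:42} was built to supply: it tells us that every sum-free $A\subseteq\FF_3^4$ with $|A|\ge 14>\tfrac16\cdot 81$ is subprimitive. This is the reason Lemma~\ref{lem:42} had to be proved \emph{before} the present proposition rather than as a corollary of Theorem~\ref{thm:main}. Once we know $A$ is subprimitive, we take a primitive superset $A^\ast\supseteq A$; then $4A\subseteq 4A^\ast$, and since $0\notin 4A^\ast$ by Lemma~\ref{lem:4A}, we get $0\notin 4A$ immediately. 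The monotonicity $4A\subseteq 4A^\ast$ is the conceptual crux: quadruple sums only grow under inclusion, so the quadruple-sum property passes from primitive supersets down to their dense sum-free subsets for free.

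\medskip

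For the inductive step in dimension $n\ge 5$, the aim is the same—exhibit a primitive superset—and the engine is precisely the accumulated structural results. Assuming $0\in 4A$ for contradiction, I would fix vectors $x_1,x_2,x_3,x_4\in A$ with $x_1+x_2+x_3+x_4=0$ and project onto a suitable quotient so as to place these four vectors across two or three cosets of a hyperplane $[H]$, mirroring the coset analysis in the proof of Lemma~\ref{lem:4A}. The goal is to engineer a configuration meeting the hypotheses of Proposition~\ref{prop:33} or Proposition~\ref{prop:37}: a hyperplane $H$ disjoint from $A\cap[H]$ (or from $A\cap H$) whose opposite face $A\cap(-H)$ spans a proper affine subspace. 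Corollary~\ref{cor:k1} guarantees that $A$ contains a line, giving the full-rank difference sets (via Lemma~\ref{lem:22}) needed to force the relevant intersections to be empty, exactly as in Lemma~\ref{lem:42}. Either proposition then certifies that $A$ is subprimitive, and we again lift to a primitive superset and invoke Lemma~\ref{lem:4A}.

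\medskip

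\textbf{The main obstacle} I anticipate is the case distinction in the inductive step: deciding which of the two propositions applies, and verifying their affine-span hypotheses, requires controlling how the four summands distribute among the hyperplane cosets. When all four $x_i$ lie in cosets forcing $A\cap(-H)$ to fill an entire affine face—so that $\aff(A\cap(-H))=-H$ and neither proposition fires directly—I would instead re-run the quotient argument one dimension lower, using $\Phi(V)$ (the induction hypothesis) to conclude that the relevant trace is itself subprimitive, then reassemble. Because the cleanest route is simply \emph{$A$ is subprimitive $\Rightarrow$ $0\notin 4A$ via Lemma~\ref{lem:4A}}, I expect the author's proof to be short, leaning almost entirely on Lemma~\ref{lem:42} for $n=4$ and on $0\notin 4A^\ast$ for the already-classified higher-dimensional sets, rather than redoing a coset analysis from scratch.
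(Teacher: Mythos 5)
Your transfer mechanism --- pass to a primitive superset $A^\ast$ and invoke $0\notin 4A^\ast$ from Lemma~\ref{lem:4A} --- is exactly right, and your handling of $\dim(V)\le 4$ via Corollary~\ref{cor:3} and Lemma~\ref{lem:42} is sound. The genuine gap is the inductive step for $n\ge 5$. There you propose to first establish that $A$ itself is subprimitive in $V$, but that statement is precisely the hard direction of Theorem~\ref{thm:main} for $V$, and the paper's proof of Theorem~\ref{thm:main} \emph{uses} Proposition~\ref{prop:42} for $V$ (it is what forces one of $A_{1i}$, $A_{2,1-i}$ to be empty before Conclusion~\ref{conc:310} can be applied). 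So your route is circular unless you supply an independent proof of subprimitivity in dimension $n$, and the proposed fix --- ``re-run the quotient argument one dimension lower, using $\Phi(V)$, then reassemble'' --- does not provide one: $\Phi(V)$ only classifies dense sum-free subsets of \emph{proper} subspaces and gives no handle on $A\subseteq V$ itself. Engineering the hypotheses of Proposition~\ref{prop:33} or~\ref{prop:37} from a single vanishing quadruple is exactly the part that cannot be taken for granted.

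The paper sidesteps all of this by localizing instead of classifying. If $x_1+x_2+x_3+x_4=0$ with all $x_i\in A$, these vectors span a three-dimensional subspace $L$ in which $\{x_1,x_2,x_3,x_4\}$ is already maximal sum-free, so $|A\cap L|=4$. For every four-dimensional subspace $Q\supseteq L$ the set $A\cap Q$ still contains the vanishing quadruple, hence cannot be subprimitive in $Q$ (by your own transfer argument via Lemma~\ref{lem:4A}), hence $|A\cap Q|\le 13$ by the contrapositive of Lemma~\ref{lem:42}, i.e.\ $|A\cap(Q\sm L)|\le 9$ out of $54$ points. Averaging over all such $Q$ yields $|A|\le 4+\tfrac{9}{54}(|V|-27)<\tfrac16|V|$, a contradiction. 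No induction on $\dim(V)$ and no structural information about $A$ in $V$ is needed; only the four-dimensional Lemma~\ref{lem:42} enters. If you want to salvage your outline, replace the inductive step by an argument of this local flavour.
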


\begin{proof}
	Assume for the sake of contradiction that $x_1$, $x_2$, $x_3$,
	and $x_4$ are four elements of $A$ summing up to $0$, and let $L$
	be the subspace of $V$ generated by these vectors. It is easily 
	seen that~$L$ is three-dimensional. 
	Moreover, $\{x_1, x_2, x_3, x_4\}$ is a maximal sum-free subset 
	of~$L$, whence $|A\cap L|=4$.
	
	Let us now consider any four-dimensional subspace $Q$ of $V$
	that contains $L$. 
	By Lemma~\ref{lem:4A} $A\cap Q$ cannot be subprimitive in $Q$ 
	and, therefore, Lemma~\ref{lem:42}
	entails $|A\cap Q|\le 13$. This proves $|A\cap (Q\sm L)|\le 9$
	and by averaging over $Q$ we infer 
		\[
		|A|
		\le 
		|A\cap L|+\tfrac{9}{54}|V\sm L|
		=
		\tfrac16(|L|-3)+\tfrac16(|V|-|L|)
		<
		\tfrac16|V|\,,
	\]
		which is absurd. 
\end{proof}

\section{Proof of the main result}

It turns out that Conclusion~\ref{conc:310} allows us to 
prove the main theorem by induction, because there always 
exists a coordinate system with respect to which its hypotheses
are valid. For the first of them this is shown as follows. 

\begin{prop}\label{prop:51}
	Let $V$ be an at least three-dimensional ternary vector 
	space satisfying~$\Phi(V)$. 
	For every sum-free set $A\subseteq V$ with $|A|>\frac16|V|$
	there is an affine subspace $Q$ of $V$ such that 
	$\dim(V/Q)=2$ and $|A\cap Q|\ge\frac12(|Q|+3)$.
\end{prop}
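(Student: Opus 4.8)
The plan is to reduce the assertion to a question about the distribution of $A$ along the fibres of a codimension-one quotient and then to feed the sum-free structure into the inductive hypothesis $\Phi(V)$ together with Lemma~\ref{lem:kaff}. When $\dim(V)=3$ a codimension-two subspace is a line, so Corollary~\ref{cor:k1} already produces a line $\ell\subseteq A$ with $|A\cap\ell|=3=\tfrac12(|\ell|+3)$, and we are done. For $\dim(V)=n\ge 4$ I would first invoke Corollary~\ref{cor:k1} to fix a line $\ell\subseteq A$, pass to the quotient $\pi\colon V\to\bar V:=V/[\ell]$ of dimension $n-1$, and record the fibre weights $w(\bar v)=|A\cap\pi^{-1}(\bar v)|\in\{0,1,2,3\}$. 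Since $|V|=3|\bar V|$, the hypothesis $|A|>\tfrac16|V|$ becomes $\sum_{\bar v}w(\bar v)>\tfrac12|\bar V|$, and because every codimension-two affine subspace $Q\subseteq V$ with $[\ell]\subseteq[Q]$ descends to a codimension-two affine subspace $\bar Q\subseteq\bar V$ with $|A\cap Q|=\sum_{\bar v\in\bar Q}w(\bar v)$ and $|Q|=3|\bar Q|$, the goal is precisely to locate such a $\bar Q$ of average weight slightly above $\tfrac32$, namely $\sum_{\bar v\in\bar Q}w(\bar v)\ge\tfrac32(|\bar Q|+1)$.

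The key point is that the super-level sets of $w$ inherit strong additive structure. Writing $P=\{w\ge1\}$, $D=\{w\ge2\}$, and $F=\{w=3\}$, a fibrewise application of the Cauchy--Davenport inequality in $\FF_3$ (or of Corollary~\ref{cor:B}) shows that $S_u+S_v$ fills the whole fibre over $\bar u+\bar v$ whenever the two fibres over $\bar u,\bar v\in D$ meet $A$ in at least two points each; as $A$ is sum-free this forces $w(\bar u+\bar v)=0$, so $D$ is sum-free in $\bar V$, and similarly $F+P$ is disjoint from $P$. I would then run the following dichotomy. If $D$ is large enough and is not contained in a hyperplane of $\bar V$, then $\Phi(V)$—applied in the lower-dimensional space $\bar V$—certifies that $D$ is subprimitive, so Lemma~\ref{lem:kaff} with $k=n-3$ produces a codimension-two affine subspace $\bar E\subseteq\bar V$ with $|D\cap\bar E|\ge\tfrac16(5|\bar E|+3)$. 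Lifting to $E=\pi^{-1}(\bar E)$, every fibre counted in $D\cap\bar E$ contributes at least two points of $A$, whence $|A\cap E|\ge2|D\cap\bar E|$ comfortably exceeds $\tfrac12(|E|+3)$, and $E$ is the desired codimension-two subspace.

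The main obstacle is the opposite, sparse regime, in which $D$ is too small to engage $\Phi(V)$ because $A$ meets almost every non-empty fibre in a single point; then $A$ behaves like a section over the support $P$, which the additive constraints only pin down to $|P|\le\tfrac12|\bar V|$. Here the fibre-weight reduction yields no concentration on its own, and I expect one must either repeat the analysis in a more favourable line direction or extract the structure directly—most plausibly by arranging coordinates so that one of Propositions~\ref{prop:33},~\ref{prop:37} or Conclusion~\ref{conc:310} applies and then exploiting the half-structure via Fact~\ref{f:25}, which guarantees a full lower-dimensional affine subspace inside a half. A secondary but genuine nuisance is the exact constant: mere pigeonholing of $A$ over three parallel slices only yields $\tfrac12(|Q|+1)$, so the additional $+3$ forces one to choose the slicing direction carefully (against the grain of the half) rather than an arbitrary one, and I would treat this optimisation hand-in-hand with the sparse case.
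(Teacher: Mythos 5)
Your base case $\dim(V)=3$ and your ``dense'' branch are fine as far as they go, but the proposal has a genuine gap that you yourself flag: the sparse regime. After quotienting by the line $[\ell]$ you need $D=\{w\ge 2\}$ to satisfy $|D|>\frac16\bigl(|\bar V|+3^{n-4}\bigr)$ (and to not lie in a hyperplane) before $\Phi(V)$ and Lemma~\ref{lem:kaff} can be invoked, and nothing in the hypotheses forces this. The constraint $|A|>\frac16|V|$ only gives $|P|+|D|+|F|>\frac12|\bar V|$, while $F-P$ being disjoint from $P$ already forces $|P|\le\frac12|\bar V|$, so the inequalities are consistent with $D$ being a handful of points. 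This is not a degenerate corner: for an aperiodic primitive set derived from $(H,U,W,X)$ with $\dim(U)=0$, the half $W$ is a union of singletons and a typical fibre of $\pi$ meets $A$ in $0$, $1$, $2$ or $3$ points with no usable lower bound on the density of $D$; your ``repeat the analysis in a more favourable line direction or extract the structure directly'' is exactly the part of the argument that needs to be supplied, and it is where all the difficulty lives.

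For comparison, the paper's proof does not quotient by a line at all. It takes the largest $k\le\dim(V)-2$ for which some $k$-dimensional affine $U$ satisfies $|A\cap U|\ge\frac16(5|U|+3)$ (a line gives $k\ge 1$ by Corollary~\ref{cor:k1}), and, if $k<\dim(V)-2$, averages over hyperplanes $J$ with $C(U)\le J\le V$ to find one with $|A\cap J|>\frac{3|J|+4|U|}{18}$. Then $\Phi(V)$ makes $A\cap J$ subprimitive in $J$, and Lemma~\ref{lem:kaff} is applied to $A\cap J$ itself: either it produces a $(k+1)$-dimensional affine space of density $\ge\frac56$, contradicting maximality of $k$, or $A\cap J$ lies in a hyperplane $Q$ of $J$ --- and that $Q$ is already the desired codimension-two space, since $|A\cap Q|=|A\cap J|>\frac12|Q|+\frac23$ forces $|A\cap Q|\ge\frac12(|Q|+3)$ by integrality. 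In particular your worry about the constant $+3$ is a non-issue (it falls out of parity), whereas the dichotomy ``spread out vs.\ trapped in a hyperplane'' applied to $A\cap J$ is precisely the mechanism that disposes of your sparse case, and it is missing from your argument.
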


\begin{proof}
	Recall that $A$ contains a line by Corollary~\ref{cor:k1}. 
	In other words, there is a one-dimensional affine subspace~$L$
	such that $|A\cap L|=\frac16(5|L|+3)$.  
	Now let $k\le \dim(V)-2$ be the largest integer such that 
	$|A\cap U|\ge\frac16(5|U|+3)$ holds for some $k$-dimensional 
	affine space $U$. If $k=\dim(V)-2$ we are done, so suppose  
	from now on that this is not the case. 
	Note that $|A\cap U|>\frac12|U|$ yields $0\not\in U$ 
	and $|C(U)|=3|U|$.
	Setting $q=\frac{|V|}{9|U|}$  
	an averaging argument shows that there is a hyperplane $J$
	such that $C(U)\le J\le V$ and 
		\begin{align*}
		|A\cap J|
		&\ge 
		|A\cap C(U)|+\frac{|J|-|C(U)|}{|V|-|C(U)|}|A\sm C(U)| 
	 	=
		\frac{q-1}{3q-1}|A|+\frac{2q}{3q-1}|A\cap C(U)| \\
		&>
		\frac{|J|}{6}
		+
		\frac{5q|U|-|J|}{9q-3}
		=
		\frac{|J|}{6}+\frac{2q|U|}{9q-3}
		>
		\frac{3|J|+4|U|}{18}\,.
	\end{align*}
		
	In particular, $A\cap J$ is subprimitive in $J$ by our 
	assumption $\Phi(V)$.  
	Furthermore, if $A\cap J$ is not contained in a hyperplane of~$J$, 
	then Lemma~\ref{lem:kaff} yields a $(k+1)$-dimensional affine 
	subspace~$U'$ of~$J$ such that $|A\cap U'|\ge \frac16(5|U'|+3)$,
	contrary to the maximality of $k$. Hence there is a 
	hyperplane $Q$ in $J$ covering $A\cap J$. Due to 
		\[
		|A\cap Q|
		=
		|A\cap J|
		>
		\tfrac16(|J|+4)
		>
		\tfrac12(|Q|+1)
	\]
		we have thereby found the desired affine subspace $Q$.  
\end{proof}

\begin{proof}[Proof of Theorem~\ref{thm:main}]
	We may assume $V=\FF_3^n$ for some $n\ge 2$ and, for reasons 
	of induction, that $\Phi(V)$ holds. Because of 
	Lemma~\ref{lem:13} it suffices to show that every sum-free 
	set $A\subseteq \FF_3^n$ of size $|A|>\frac16\cdot 3^n$ 
	is subprimitive.
	Let $\phi\colon \FF_3^n\lra \FF_3^2$ be the projection onto 
	the first two coordinates and set $V_{ij}=\phi^{-1}(i, j)$,
	$A_{ij}=A\cap V_{ij}$ for every point $(i, j)\in\FF_3^2$. 
	Owing to Proposition~\ref{prop:51} we may assume 
	that $|A_{01}|\ge \frac12(3^{n-2}+3)$, which yields 
		$A_{01}+A_{01}=V_{02}$.
	
	On the other hand, for every $i\in \FF_3$ 
	Proposition~\ref{prop:42} discloses 
	$A_{01}+A_{01}+A_{1i}+A_{2,1-i}\ne V_{00}$, 
	which together with 
	$(0, 1)+(0, 1)+(1, i)+(2, 1-i)=(0, 0)$ informs us that 
	at least one of the sets $A_{1i}$, $A_{2, 1-i}$ is empty. 
	In the light of Conclusion~\ref{conc:310} this shows that~$A$
	is indeed subprimitive.
\end{proof}

\section{Concluding remarks}
Given an arbitrary finite abelian group $G$ let $t(G)$ 
be the largest possible size of an aperiodic maximal 
sum-free set $A\subseteq G$.
For some groups $G$ no such sets exist and then we put $t(G)=0$.
It seems plausible that this invariant of $G$ tends to be `small' 
if $G$ has `many' subgroups and thus the special case that $G$ is 
a finite vector space might be especially interesting. 

As pointed out in the introduction, Davydov and Tombak~\cite{DT} 
proved 
\[
	t(\FF_2^n)
	=
	\begin{cases}
		1 & \text{ if $n=1$} \\
		0 & \text{ if $n=2, 3$} \\
		2^{n-2}+1 & \text{ if $n\ge 4$}\,,
	\end{cases}
\]
while Lev's periodicity conjecture asserts 
\[
	t(\FF_3^n)
	=
	\begin{cases}
		\tfrac12(3^{n-1}+1) & \text{ if $n\ne 2$} \\
		0 & \text{ if $n=2$}\,.	
	\end{cases}
\]
For primes $p$ satisfying $p\equiv 1\pmod{3}$ results 
of Rhemtulla and Street~\cites{RS70, RS71} imply 
\[
	t(\FF_p^n)=\tfrac13(p-1)p^{n-1}\,,
\]
while~\cite{RZ24a} yields 
\[
	t(\FF_p^n)=
	\begin{cases}
		\frac13(p+1) & \text{ if } n=1 \\
		\frac13(p-2)p^{n-1} & \text{ if } n\ge 2
	\end{cases}
\]
whenever $p\equiv 2\pmod{3}$ and $p\ge 11$. This leaves only the 
case $p=5$ open. It is not difficult to check 
directly that $t(\FF_5)=2$ and $t(\FF^2_5)=5$ (see~\cite{LV23}). 
Moreover, the main result of~\cite{RZ24b} entails $t(\FF_5^3)=28$. 
For $n\ge 4$, however, nothing seems to be known about $t(\FF_5^n)$ 
and nobody ever conjectured anything.

\begin{bibdiv}
\begin{biblist}

\bib{CL07}{article}{
   author={Croot, Ernest S., III},
   author={Lev, Vsevolod F.},
   title={Open problems in additive combinatorics},
   conference={
      title={Additive combinatorics},
   },
   book={
      series={CRM Proc. Lecture Notes},
      volume={43},
      publisher={Amer. Math. Soc., Providence, RI},
   },
   date={2007},
   pages={207--233},
   review={\MR{2359473}},
   doi={10.1090/crmp/043/10},
}

\bib{DT}{article}{
   author={Davydov\rn{(Davydov)}, A. A.},
   author={Tombak\rn{(Tombak)}, L. M.},
   title={\rn{Kvazisovershennye linei0nye dvoichnye kody s 
   rasstoj1niem 4 i polnye shapki v proektivnoi0 geometrii}},
   language={Russian},
   journal={\rn{Problemy peredachi informatsii}},
   volume={25},
   date={1989},
   number={4},
   pages={11--23},
   issn={0555-2923},
   translation={
      journal={Problems Inform. Transmission},
      volume={25},
      date={1989},
      number={4},
      pages={265--275 (1990)},
      issn={0032-9460},
   },
   review={\MR{1040020}},
}

\bib{EGM}{article}{
   author={Eberhard, Sean},
   author={Green, Ben},
   author={Manners, Freddie},
   title={Sets of integers with no large sum-free subset},
   journal={Ann. of Math. (2)},
   volume={180},
   date={2014},
   number={2},
   pages={621--652},
   issn={0003-486X},
   review={\MR{3224720}},
   doi={10.4007/annals.2014.180.2.5},
}

\bib{E65}{article}{
   author={Erd\H{o}s, P.},
   title={Extremal problems in number theory},
   conference={
      title={Proc. Sympos. Pure Math., Vol. VIII},
   },
   book={
      publisher={Amer. Math. Soc., Providence, RI},
   },
   date={1965},
   pages={181--189},
   review={\MR{174539}},
}

\bib{Green}{article}{
   author={Green, Ben},
   title={The Cameron-Erd\H{o}s conjecture},
   journal={Bull. London Math. Soc.},
   volume={36},
   date={2004},
   number={6},
   pages={769--778},
   issn={0024-6093},
   review={\MR{2083752}},
   doi={10.1112/S0024609304003650},
}

\bib{Kn53}{article}{
   author={Kneser, Martin},
   title={Absch\"{a}tzung der asymptotischen Dichte von Summenmengen},
   language={German},
   journal={Math. Z.},
   volume={58},
   date={1953},
   pages={459--484},
   issn={0025-5874},
   review={\MR{56632}},
   doi={10.1007/BF01174162},
}

\bib{Kn55}{article}{
   author={Kneser, Martin},
   title={Ein Satz \"{u}ber abelsche Gruppen mit Anwendungen auf die 
   	Geometrie der Zahlen},
   language={German},
   journal={Math. Z.},
   volume={61},
   date={1955},
   pages={429--434},
   issn={0025-5874},
   review={\MR{68536}},
   doi={10.1007/BF01181357},
}
			
\bib{VL05}{article}{
   author={Lev, Vsevolod F.},
   title={Large sum-free sets in ternary spaces},
   journal={J. Combin. Theory Ser. A},
   volume={111},
   date={2005},
   number={2},
   pages={337--346},
   issn={0097-3165},
   review={\MR{2156218}},
   doi={10.1016/j.jcta.2005.01.004},
}
	      
\bib{VL23}{article}{
   author={Lev, Vsevolod F.},
   title={Large sum-free sets in $\ZZ_5^n$},
   journal={J. Combin. Theory Ser. A},
   volume={205},
   date={2024},
   pages={Paper No. 105865, 9},
   issn={0097-3165},
   review={\MR{4700167}},
   doi={10.1016/j.jcta.2024.105865},
}

\bib{RS70}{article}{
   author={Rhemtulla, A. H.},
   author={Street, Anne Penfold},
   title={Maximal sum-free sets in finite abelian groups},
   journal={Bull. Austral. Math. Soc.},
   volume={2},
   date={1970},
   pages={289--297},
   issn={0004-9727},
   review={\MR{263920}},
   doi={10.1017/S000497270004199X},
}

\bib{RS71}{article}{
   author={Rhemtulla, A. H.},
   author={Street, Anne Penfold},
   title={Maximal sum-free sets in elementary abelian $p$-groups},
   journal={Canad. Math. Bull.},
   volume={14},
   date={1971},
   pages={73--80},
   issn={0008-4395},
   review={\MR{292936}},
   doi={10.4153/CMB-1971-014-2},
}

\bib{TV17}{article}{
   author={Tao, Terence},
   author={Vu, Van},
   title={Sum-free sets in groups: a survey},
   journal={J. Comb.},
   volume={8},
   date={2017},
   number={3},
   pages={541--552},
   issn={2156-3527},
   review={\MR{3668880}},
   doi={10.4310/JOC.2017.v8.n3.a7},
}

\bib{RZ24a}{article}{
	author={Reiher, Chr.},
	author={Zotova, Sofia},
	title={Large sum-free sets in finite vector spaces I.},
	eprint={2408.11232},
}

\bib{RZ24b}{article}{
	author={Reiher, Chr.},
	author={Zotova, Sofia},
	title={Large sum-free sets in finite vector spaces II.},
	note={Unpublished manuscript},
}

\bib{Schur}{article}{
	author={Schur, Issai},
	title={\"{U}ber die Kongruenz $x^m+y^m\equiv z^m \pmod{p}$},
	journal={Deutsche Math. Ver.},
	volume={25},
	date={1916},
	pages={114--117},
} 

\bib{LV23}{article}{
   author={Versteegen, Leo},
   title={The structure of large sum-free sets in $\Bbb F^n_p$},
   journal={Q. J. Math.},
   volume={75},
   date={2024},
   number={3},
   pages={1057--1071},
   issn={0033-5606},
   review={\MR{4789079}},
   doi={10.1093/qmath/haae042},
}
       
\bib{WSW}{book}{
   author={Wallis, W. D.},
   author={Street, Anne Penfold},
   author={Wallis, Jennifer Seberry},
   title={Combinatorics: Room squares, sum-free sets, Hadamard matrices},
   series={Lecture Notes in Mathematics, Vol. 292},
   publisher={Springer-Verlag, Berlin-New York},
   date={1972},
   pages={iv+508},
   review={\MR{392580}},
}

\end{biblist}
\end{bibdiv}
\end{document}